\newtheorem{theorem}{Theorem}[section]
\newtheorem{lemma}[theorem]{Lemma}
\newtheorem{corollary}[theorem]{Corollary}
\newtheorem{question}[theorem]{Question}
\theoremstyle{definition}
\newtheorem{definition}[theorem]{Definition}
\theoremstyle{remark}
\newtheorem{example}[theorem]{Example}
\begin{document}
\noindent  \vspace{0.5in}

\title[Open uniform (G) at non-isolated points and maps]%
{Open uniform (G) at non-isolated points and maps}

\author{Fucai Lin}
\address{(Fucai Lin)Department of Mathematics and Information Science,
Zhangzhou Normal University, Zhangzhou 363000, P. R. China}
\email{linfucai2008@yahoo.com.cn}

\author{Shou Lin}
\address{(Shou Lin)Institute of Mathematics, Ningde Teachers' College, Ningde, Fujian
352100, P. R. China; Department of Mathematics and Information
Science, Zhangzhou Normal University, Zhangzhou 363000, P. R. China}
\email{linshou@public.ndptt.fj.cn}
\thanks{Supported by the NSFC(No. 10971185) and the Educational Department of Fujian Province(No. JA09166).}

\subjclass[2000]{54C10; 54D70; 54E30; 54E40} \keywords{Open uniform
(G) at non-isolated points; uniform base at non-isolated points;
developable at non-isolated points; open and $k$-to-one maps;
perfect maps.}

\begin{abstract} In this paper, we mainly introduce the notion of
an open uniform (G) at non-isolated points, and show that a space
$X$ has an open uniform (G) at non-isolated points if and only if
$X$ is the open boundary-compact image of metric spaces. Moreover,
we also discuss the inverse image of spaces with an open uniform (G)
at non-isolated points. Two questions about open uniform (G) at
non-isolated points are posed.
\end{abstract}

\maketitle

\parskip 0.15cm

\section{Introduction}
In \cite{LL}, F. C. Lin and S. Lin defined the notion of uniform
bases at non-isolated points, and obtained that a space $X$ has an
uniform base at non-isolated points if and only if $X$ is the open
and boundary-compact image of metric spaces. Isbell-Mr\'{o}wka space
$\psi(D)$\cite{MS} has an uniform base at non-isolated points, and
however, it has not any uniform base. It is well known that a space
has an uniform base if and only if it has an open uniform (G) if and
only if it is the open compact image of metric spaces. Therefore, we
generalize the notion of open uniform (G), and define the notion of
the open uniform (G) at non-isolated points such that a space has an
open uniform (G) at non-isolated points if and only if it has an
uniform base at non-isolated points. In \cite{LL1}, F. C. Lin and S.
Lin have discussed the image of spaces with an uniform base at
non-isolated points. In this paper, we also also discuss the inverse
image of spaces with an uniform base at non-isolated points.

By $\mathbb{R, N}$, denote the set of all real numbers and positive
integers, respectively. For a topological space $X$, let $\tau (X)$
denote the topology for $X$, and let
$$I(X)=\{x:x \mbox{ is an isolated point of } X\},$$
$$X^{d}=X-I(X),$$
$$\mathcal{I}(X)=\{\{x\}:x\in I(X)\}.$$

In this paper all spaces are Hausdorff, all maps are continuous and
onto. Recall some basic definitions.

\begin{definition}\label{d0}
Let $\mathcal{P}$ be a base of a space $X$. $\mathcal{P}$ is an {\it
uniform base} \cite{Al}~(resp. {\it uniform base at non-isolated
points} \cite{LL}) for $X$ if for each ($resp.$\ non-isolated) point
$x\in X$ and any countably infinite subset $\mathcal{P}^{\prime}$ of
$\{P\in\mathcal{P}:x\in P\}$, $\mathcal{P}^{\prime}$ is a
neighborhood base at $x$ in $X$.
\end{definition}

\begin{definition}\label{d1}
A space $X$ has an {\it open uniform (G)}\cite{MR} (resp. {\it open
uniform (G) at non-isolated points}), if there exists a collection
$\mathcal{W}=\{\mathcal{W}_{x}: x\in X\}$ of open subsets of $X$
satisfying the following conditions:
\begin{enumerate}
\item For each $x\in X$, $x\in \cap\mathcal{W}_{x}$ and $|\mathcal{W}_{x}|\leq
\aleph_{0}$;

\item For each $x\in U\in\tau (X)$, there exists an open neighborhood
$V(x, U)$ of $x$ such that there is a $W\in\mathcal{W}_{y}$ with
$x\in W\subset U$ for each $y\in V(x, U)$ ($y\in V(x, U)\cap
X^{d}$);

\item For each $x\in X$, $\mathcal{W}_{x}^{\prime}$ is a network at
point $x$ for any infinite subfamily
$\mathcal{W}_{x}^{\prime}\subset \mathcal{W}_{x}$.
\end{enumerate}
\end{definition}

In the Definitions~\ref{d0} and~\ref{d1}, ``at non-isolated points''
means ``at each non-isolated point of $X$''. If
$\mathcal{W}=\{\mathcal{W}_{x}: x\in X\}$ is an open uniform (G) at
non-isolated points, then $(\mathcal{W}\setminus\{\mathcal{W}_{x}:
x\in I\})\cup\{\mathcal{W}_{x}^{\prime}=\{x\}: x\in I\}$ is also an
open uniform (G) at non-isolated points for $X$. Therefore, we
always suppose that $\mathcal{W}_{x}=\{x\}$ if $x\in I$ in this
paper. It is obvious that spaces with an open uniform (G) have an
open uniform (G) at non-isolated points.

\begin{definition} Let $f:X\rightarrow
Y$ be a map.
\begin{enumerate}
\item $f$ is a {\it compact map} if each $f^{-1}(y)$ is compact in $X$;

\item $f$ is a {\it boundary-compact map}, if each
$\partial f^{-1}(y)$ is compact in $X$;

\item $f$ is a {\it perfect map} if it is a closed and compact map.

\item $f$ is called a {\it $\leq k$-to-one} (resp. {\it $k$-to-one},
{\it finite-to-one) map} if $|f^{-1}(y)|\leq k$ (resp. $|f^{-1}(y)|=
k$, $f^{-1}(y)$ is finite) for every $y\in Y$, where $k\in
\mathbb{N}$;

\item $f$ is called a {\it local homeomorphism} if, for each $x\in
X$, there exists an open neighborhood $U$ of $x$ in $X$ such that
$f|U:U\rightarrow f(U)$ is a homeomorphism map and $f(U)$ is open in
$Y$.

\item $f$ is an {\it irreducible map} if there does not exist a
proper closed subset $X'$ of $X$ such that $f(X')=Y$.
\end{enumerate}
\end{definition}

\begin{definition}\cite{LL}
Let $X$ be a space and $\{\mathcal{P}_{n}\}_{n}$ a sequence of
collections of open subsets of $X$. $\{\mathcal{P}_{n}\}_{n}$ is
called a {\it development at non-isolated points} for $X$ if
$\{\mbox{st}(x,\mathcal{P}_{n})\}_{n}$ is a neighborhood base at $x$
in $X$ for each non-isolated point $x\in X$. $X$ is called {\it
developable at non-isolated points} if $X$ has a development at
non-isolated points.
\end{definition}

\begin{definition}\cite{LL}
Let $\mathcal P$ be a family of subsets of a space $X$.\
$\mathcal{P}$ is called {\it point-finite at non-isolated
points}(resp. {\it point-countable at non-isolated points}) if for
each non-isolated point $x\in X$, $x$ belongs to at most finitely
(countably) many elements of $\mathcal{P}$.\ Let $\{\mathcal
P_n\}_{n}$ be a development at non-isolated points for $X$.
$\{\mathcal P_n\}_{n}$ is said to be {\it a point-finite development
at non-isolated points} for $X$ if each $\mathcal P_n$ is
point-finite at each non-isolated point of $X$.
\end{definition}

\begin{definition}
Let $X$ be a topological space. $g:\mathbb{N}\times X\rightarrow
\tau(X)$ is called a $g$-function, if $x\in g(n, x)$ and $g(n+1,
x)\subset g(n, x)$ for any $x\in X$ and $n\in \mathbb{N}$. For
$A\subset X$, put $$g(n,A)=\bigcup_{x\in A}g(n, x).$$
\end{definition}

Readers may refer to \cite{En, Ls} for unstated definitions and
terminology.

\vskip 0.5cm
\section{Open uniform (G) at non-isolated points}
In this section, we mainly show that a space has an open uniform (G)
at non-isolated points if and only if it has an uniform base at
non-isolated points. Firstly, we give some technique lemmas.

\begin{lemma}\cite{LL1}\label{l4}
Let $X$ be a topological space. Then the following conditions are
equivalent:
\begin{enumerate}
\item $X$ is an open boundary-compact image of a metric space;

\item $X$ has an uniform base at non-isolated points;

\item $X$ has a point-finite development at non-isolated points;

\item $X$ has a development
at non-isolated points, and $X^{d}$ is a metacompact subspace of
$X$.
\end{enumerate}
\end{lemma}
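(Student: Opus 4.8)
The plan is to prove the cycle $(1)\Rightarrow(3)\Rightarrow(2)\Rightarrow(1)$ together with $(3)\Rightarrow(4)\Rightarrow(3)$; since the equivalence $(1)\Leftrightarrow(2)$ has already been recorded in \cite{LL}, it is enough to supply $(1)\Rightarrow(3)$, $(3)\Rightarrow(2)$ and $(3)\Leftrightarrow(4)$. The whole argument is the ``at non-isolated points'' counterpart of the classical characterizations of spaces with a uniform base; the extra work is to keep everything localized to $X^{d}$ and to check that the isolated points, on which nothing is required, spoil nothing.

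For $(1)\Rightarrow(3)$, let $f\colon M\to X$ be an open boundary-compact map from a metric space $M$. The one genuinely new point is this: if $y\in X^{d}$ then $f^{-1}(y)$ has empty interior in $M$, since otherwise $f$ would carry that interior onto the open set $\{y\}$, making $y$ isolated; as $f^{-1}(y)$ is closed, it therefore equals $\partial f^{-1}(y)$ and so is compact. Now fix a $\sigma$-locally finite base $\bigcup_{n}\mathcal{B}_{n}$ of $M$ with $\mathrm{mesh}(\mathcal{B}_{n})\to 0$ and put $\mathcal{P}_{n}=\{f(B):B\in\mathcal{B}_{n}\}$, a collection of open subsets of $X$. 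For non-isolated $y$: compactness of $f^{-1}(y)$ against the local finiteness of $\mathcal{B}_{n}$ shows that only finitely many members of $\mathcal{B}_{n}$ meet $f^{-1}(y)$, so $\mathcal{P}_{n}$ is point-finite at $y$; and for open $U\ni y$, since $f^{-1}(y)$ is a compact set inside the open set $f^{-1}(U)$, for all large $n$ every $B\in\mathcal{B}_{n}$ meeting $f^{-1}(y)$ lies in $f^{-1}(U)$, whence the open neighbourhood $\mathrm{st}(y,\mathcal{P}_{n})$ of $y$ is contained in $U$. Thus $\{\mathcal{P}_{n}\}_{n}$ is a point-finite development at non-isolated points.

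For $(3)\Rightarrow(2)$, take a point-finite development at non-isolated points $\{\mathcal{G}_{n}\}_{n}$ and replace it by the common refinements $\mathcal{G}_{n}'=\mathcal{G}_{1}\wedge\cdots\wedge\mathcal{G}_{n}$; these are still open, still point-finite at each point of $X^{d}$, and $\mathcal{G}_{n+1}'$ refines $\mathcal{G}_{n}'$, so the stars $\mathrm{st}(x,\mathcal{G}_{n}')$ decrease in $n$ and still form a neighbourhood base at every $x\in X^{d}$. Then $\mathcal{P}=\bigcup_{n}\mathcal{G}_{n}'\cup\mathcal{I}(X)$ is a base of $X$, and for a non-isolated $x$ any countably infinite $\mathcal{P}'\subset\{P\in\mathcal{P}:x\in P\}$ must meet $\mathcal{G}_{n}'$ for infinitely many $n$, which together with the decreasing neighbourhood base of stars forces $\mathcal{P}'$ to be a neighbourhood base at $x$; hence $\mathcal{P}$ is a uniform base at non-isolated points. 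For $(3)\Rightarrow(4)$: restricting the members of a point-finite development at non-isolated points to $X^{d}$ yields an honest point-finite development of the subspace $X^{d}$ (every point of $X^{d}$ is non-isolated in $X$, so the star neighbourhood bases are available there), and a space with a point-finite development is metacompact by the classical theorem; developability at non-isolated points is clear. For $(4)\Rightarrow(3)$: given a development at non-isolated points $\{\mathcal{G}_{n}\}_{n}$ with $X^{d}$ metacompact, refine each cover $\{G\cap X^{d}:G\in\mathcal{G}_{n}\}$ of $X^{d}$ to a point-finite open (in $X^{d}$) refinement $\mathcal{R}_{n}$, extend each $R\in\mathcal{R}_{n}$ to an open subset $O_{R}$ of $X$ with $O_{R}\cap X^{d}=R$ and $O_{R}$ inside the member of $\mathcal{G}_{n}$ that $R$ refines, and set $\mathcal{H}_{n}=\{O_{R}:R\in\mathcal{R}_{n}\}\cup\mathcal{I}(X)$; then for $x\in X^{d}$ the relations $x\in O_{R}$ and $x\in R$ are equivalent, so $\mathcal{H}_{n}$ is point-finite at non-isolated points, and $\mathrm{st}(x,\mathcal{H}_{n})\subset\mathrm{st}(x,\mathcal{G}_{n})$ is an open neighbourhood of $x$, so $\{\mathcal{H}_{n}\}_{n}$ is a point-finite development at non-isolated points. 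Finally $(2)\Rightarrow(1)$ is \cite{LL}, closing the argument.

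I expect the main obstacle to be the bookkeeping in $(4)\Rightarrow(3)$, and dually in $(1)\Rightarrow(3)$: one must simultaneously keep a refinement open in the whole space $X$, point-finite at non-isolated points, and controlled by the old stars, while the isolated points are not allowed to disturb any of these properties. The implication $(1)\Rightarrow(3)$ rests entirely on the small observation that an open map has compact fibres over $X^{d}$, which is what lets the classical ``open compact image of a metric space'' machinery survive the weakening to boundary-compact maps; and $(3)\Rightarrow(4)$ quietly leans on the substantial classical fact that a point-finite development implies metacompactness.
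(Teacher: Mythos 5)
This lemma is quoted in the paper from \cite{LL1} and no proof of it appears in the present text, so there is no in-paper argument to compare your proposal against; judged on its own, your reconstruction is correct. The decisive point, that an open map has compact fibres over non-isolated points (empty interior of $f^{-1}(y)$ forces $f^{-1}(y)=\partial f^{-1}(y)$), is exactly what makes the classical ``open compact image of a metric space'' machinery work for boundary-compact maps, and your localized versions of the standard arguments --- mesh-refining a locally finite cover sequence of $M$ for $(1)\Rightarrow(3)$, taking meets and adding $\mathcal{I}(X)$ for $(3)\Rightarrow(2)$, and the trace/extension bookkeeping $O_R=W\cap G$ for $(3)\Leftrightarrow(4)$ --- all check out, including the needed facts that each $\mathcal{G}_n$ covers $X^{d}$ and that no singleton from $\mathcal{I}(X)$ contains a non-isolated point. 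Your two external appeals are legitimate: $(2)\Leftrightarrow(1)$ is indeed the main result of \cite{LL} as the introduction states, and ``a space with a point-finite development is metacompact,'' applied to the subspace $X^{d}$ with the restricted development, is part of the classical Aleksandrov--Arhangel'ski\u{\i}--Heath circle of equivalences for uniform bases (alternatively you could have obtained metacompactness of $X^{d}$ directly from the open boundary-compact image as in the proof of Lemma 3.13 of this paper, avoiding that citation).
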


\begin{lemma}\label{l0}
Let $X$ have an open uniform (G) at non-isolated points. Then there
exists a $g$-function such that for each $x\in X^{d}$ and any
sequence $\{x_{n}\}_{n}$ with $x_{n}\in g(n, x)$ or $x\in g(n,
x_{n})$, $\{x_{n}\}_{n}$ has a subsequence converging to $x$.
\end{lemma}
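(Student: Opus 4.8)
The plan is to reduce to a convenient form of the data, then build the $g$-function by feeding condition (2) into the neighbourhood base produced by condition (3), and finally verify the convergence property by splitting the sequence into two pure cases.

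\emph{Set-up and construction.} Since $\mathcal{W}_x=\{x\}$ for $x\in I(X)$, only the non-isolated points matter. For $x\in X^d$, applying condition (2) with the open sets $X\setminus\{z\}$ ($z\ne x$) and taking $y=x$ in its ``for each $y$'' clause, together with the $T_1$ property, gives $\bigcap\mathcal{W}_x=\{x\}$; hence $\mathcal{W}_x$ is infinite, and we fix an enumeration $\mathcal{W}_x=\{W_n(x):n\in\mathbb{N}\}$ and set $g_0(n,x)=\bigcap_{i\le n}W_i(x)$. By condition (3) the whole family $\mathcal{W}_x$ is a network at $x$, so every neighbourhood of $x$ contains some $W_i(x)$, hence some $g_0(n,x)$; thus $\{g_0(n,x)\}_n$ is a decreasing neighbourhood base at $x$ (in particular $X$ is first countable at each non-isolated point). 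Now put $g(n,x)=\{x\}$ for $x\in I(X)$, and for $x\in X^d$ define recursively $g(1,x)=W_1(x)$ and $g(n,x)=W_n(x)\cap g(n-1,x)\cap V(x,g(n-1,x))$, where $V(x,g(n-1,x))$ is the neighbourhood of $x$ furnished by condition (2) for the open set $g(n-1,x)$. Then $g$ is a $g$-function and $g(n,x)\subseteq g_0(n,x)$ for every $n$.

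\emph{Reduction and the easy case.} Given a sequence $\{x_n\}$ as in the statement, write $A=\{n:x_n\in g(n,x)\}$ and $B=\mathbb{N}\setminus A$ (so $x\in g(n,x_n)$ for $n\in B$); one of $A,B$ is infinite, and restricting to it and re-indexing is harmless because $g$ is decreasing in its first argument, so we land in one of two pure cases. If $x_n\in g(n,x)$ for all $n$, then $x_n\in g_0(n,x)$ and, since $\{g_0(n,x)\}_n$ is a neighbourhood base at $x$, already $x_n\to x$. Hence everything reduces to the case $x\in g(n,x_n)$ for all $n$. In that case $x_n\in X^d$ for every $n$ (otherwise $g(n,x_n)=\{x_n\}$ would not contain $x$), and we may assume $x_n\ne x$ for all $n$. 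From $x\in g(n,x_n)\subseteq V(x_n,g(n-1,x_n))$ and condition (2), taking $y=x$ (which is non-isolated), we get $W^{(n)}\in\mathcal{W}_x$ with $x_n\in W^{(n)}\subseteq g(n-1,x_n)$; write $W^{(n)}=W_{j_n}(x)$.

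\emph{The crux.} Now the argument bifurcates. If for every $N$ the family $\{W_{j_n}(x):n\ge N\}$ is infinite, then each such tail is an infinite subfamily of $\mathcal{W}_x$, hence a network at $x$ by condition (3); choosing greedily $n_1<n_2<\cdots$ with $W_{j_{n_k}}(x)\subseteq g_0(k,x)$ gives $x_{n_k}\in g_0(k,x)$ for each $k$, so $x_{n_k}\to x$ by the neighbourhood-base property. The remaining possibility, which I expect to be the main obstacle, is that some tail $\{W_{j_n}(x):n\ge N\}$ is finite; after a tail and a pigeonhole there are then a single $W_{j_0}(x)\in\mathcal{W}_x$ and an infinite $S$ with $x_n\in W_{j_0}(x)\subseteq g(n-1,x_n)\subseteq g_0(n-1,x_n)$ for all $n\in S$, so that $\{x_n\}_{n\in S}$ lies in the fixed neighbourhood $W_{j_0}(x)$ of $x$ and is ``$g$-Cauchy'' ($x_m\in g_0(n-1,x_n)$ for all $m,n\in S$); one must show this cannot coexist with $\{x_n\}$ having no subsequence converging to $x$. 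I plan to preclude it by making the extremal choice $j_n=\max\{j:x_n\in W_j(x)\subseteq g(n-1,x_n)\}$ --- a legitimate choice, since that index set is finite whenever $x_n\ne x$ (an infinite one would, via condition (3), force $x_n$ into every neighbourhood of $x$, hence $x_n=x$) --- and then running an induction that uses condition (2) to relay the shrinking of the level-neighbourhoods $g(n-1,x_n)$ into membership of $x_n$ in ever-deeper members of $\mathcal{W}_x$, thereby driving $j_n\to\infty$ and returning us to the first alternative. Once this delicate case is settled, the mixed case of the statement follows at once from the two pure cases.
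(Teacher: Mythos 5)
Your set-up, the reduction to the two pure cases, the easy case $x_n\in g(n,x)$, and the extraction from condition (2) of $W^{(n)}\in\mathcal{W}_x$ with $x_n\in W^{(n)}\subseteq g(n-1,x_n)$, together with the observation that condition (3) disposes of the subcase in which the tails $\{W^{(n)}:n\ge N\}$ stay infinite, are all correct and parallel the paper's own Case 1/Case 2 split. But the remaining subcase --- one fixed $W\in\mathcal{W}_x$ and an infinite $S$ with $x,x_m\in W\subseteq g(n-1,x_n)$ for all $m,n\in S$ while every $x_n$ avoids $U$ --- is exactly where the content of the lemma lies, and you do not prove it: you only announce a plan to ``drive $j_n\to\infty$''. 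That plan has no engine. The index $j_n=\max\{j:x_n\in W_j(x)\subseteq g(n-1,x_n)\}$ is measured against an arbitrary enumeration of $\mathcal{W}_x$, and neither (2) nor (3) gives any control over \emph{which} members of $\mathcal{W}_x$ witness (2), so nothing forces the maximal index to grow along $S$. Indeed, maximality only yields that for every $j>j_0$ with $W_j(x)\subseteq g(n-1,x_n)$ one has $x_n\notin W_j(x)$, which is perfectly consistent; the configuration ``infinitely many non-isolated points lying, together with $x$, in a single open set $W$ that sits arbitrarily deep in each of their neighbourhood filters'' is not excluded by anything in your recursion $g(n,x)=W_n(x)\cap g(n-1,x)\cap V(x,g(n-1,x))$, because that recursion contains no device for separating distinct non-isolated points from one another.

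This is precisely where the paper invests its machinery. Before defining $g$ it constructs (Claim 1) a sequence $\{\mathcal{H}_n\}_n$ of open covers, point-finite at non-isolated points (using metacompactness of $X^{d}$), by an inductive refinement in which each $H\in\mathcal{H}_n$ is anchored at a point $h_n(H)$ and the auxiliary neighbourhoods $O(n+1,x)=G(n+1,x)\setminus\{h_m(H): m\le n,\ H\in(\mathcal{H}_m)_x,\ x\neq h_m(H)\}$ delete the previously chosen anchors; this deletion is what forces, in the constant-$W$ situation, the relevant anchor points to coincide, and yields Claim 2: $X^{d}\cap\bigcap_{n}\mbox{st}(x,\mathcal{H}_{n})=\{x\}$. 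The $g$-function is then threaded through $V(x,H(n,x))\cap G(n,x)$ with $H(n,x)\in\mathcal{H}_n$, so that in your bad subcase one gets $x_n\in W\subseteq H(n,x_n)\subseteq\mbox{st}(x,\mathcal{H}_{n})$, and constancy of $W$ places $x_n$ in $\bigcap_{m}\mbox{st}(x,\mathcal{H}_{m})\cap X^{d}=\{x\}$, the desired contradiction. Some separating structure of this kind (a point-finite ``diagonal'' sequence of covers, or an equivalent device built into $g$) is what your argument is missing; as written, the proof is incomplete at its decisive step.
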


\begin{proof}
Let $\mathcal{W}=\{\mathcal{W}_{x}: x\in X\}$ be an open uniform (G)
at non-isolated points for $X$.

Claim 1: There exists a sequences $\{\mathcal{H}_{n}\}_{n}$ of open
coverings of $X$, where $\mathcal{H}_{n}$ is point-finite at
non-isolated points for each $n\in \mathbb{N}$.

For each $x\in X$, let $\{G(n, x)\}_{n}$ be a decreasing open
neighborhood base at $x$, where, for each $x\in \mathbb{N}$, $G(n,
x)=\{x\}$ if $x\in I$. Next, we define the point-finite open
covering $\mathcal{H}_{n}$ at non-isolated points, $h_{n}:
\mathcal{H}_{n}\rightarrow X$ and open neighborhood $O(n ,x)$ of $x$
for each $x\in X$ by induction on $n\in \mathbb{N}$. Firstly, let
$\mathcal{H}_{0}=\{X\}$, and choose a point $z\in X$ and define
$h_{0}: \mathcal{H}_{0}\rightarrow X$ with $h_{0}(X)=z$. Put
\[O(1, x)=\left\{
\begin{array}{lll}
G(1, x), & x=z,\\
G(1, x)-\{z\}, & x\neq z.\end{array}\right.\] Suppose that we have
defined $\mathcal{H}_{m-1}, h_{m-1}$, and $O(m, x)$ for each $m\leq
n$ and $x\in X$. We endow $\mathcal{H}_{m-1}$ with a well-order by
$(\mathcal{H}_{m-1}, <)$. For each $H\in \mathcal{H}_{n-1}$, since
$X^{d}$ is hereditarily metacompact, there exists an open covering
$\mathcal{F}_{n}(H)$ of $H$ such that $\mathcal{F}_{n}(H)$ is
point-finite at non-isolated points and refines $\{H\cap V(x, O(n,
x))\}_{x\in H}$, where $V(x, O(n, x))$ is the open neighborhood of
$x$ stated in (3) of Definition 1.2. Put

$\mathcal{H}_{n}(H)=\mathcal{F}_{n}(H)-\cup\{\mathcal{F}_{n}(H^{\prime}):
H^{\prime}< H\}, H\in\mathcal{H}_{n-1};$

$\mathcal{H}_{n}=\cup\{\mathcal{H}_{n}(H):
H\in\mathcal{H}_{n-1}\}.$\\
Then $\mathcal{H}_{n}$ is an open covering of $X$, which is
point-finite at non-isolated points. For each $H\in\mathcal{H}_{n}$,
there exists just one $H^{\prime}\in\mathcal{H}_{n-1}$ such that
$H\in \mathcal{H}_{n}(H^{\prime})\subset
\mathcal{F}_{n}(H^{\prime})$. Then we can choose a point $x_{H}\in
H^{\prime}$ such that $H\subset H^{\prime}\cap V(x_{H}, O(n,
x_{H}))\subset O(n, x_{H})$. Define

$h_{n}(H)=x_{H};$

$O(n+1, x)=G(n+1, x)-\{h_{m}(H): m\leq n, H\in
(\mathcal{H}_{m})_{x}\ \mbox{and}\ x\neq h_{m}(H)\}.$\\
If $x\in X^{d}$, then $x\in O(n+1, x)\in\tau (X)$; if $x\in I$, then
$G(n+1, x)=O(n+1, x)=\{x\}\in\tau (X)$.

Claim 2: For each $x\in X^{d}$,
$X^{d}\cap\bigcap_{n=0}^{\infty}\mbox{st}(x,
\mathcal{H}_{n})=\{x\}.$

Suppose not, there exist distinct points $x, y\in X^{d}$ and a
sequence $\{H_{n}\}_{n}$ of subsets of $X$ such that $x, y\in
H_{n}\in\mathcal{H}_{n}$ for each $n\in \mathbb{N}$. For each
$n\in\mathbb{N}$, there exists just one sequence
$\{H_{n}^{m}\}_{m\leq n}$ such that $H_{n}^{n}=H_{n}, H_{n}^{m}\in
\mathcal{H}_{m}$ and $H_{n}^{m}\in\mathcal{H}_{m}(H_{n}^{m-1})$ for
each $1<m\leq n$. Then $x\in H_{n}\subset H_{n}^{m}\subset
H_{n}^{m-1}$. Since $\mathcal{H}_{m}$ is point-finite at point $x$,
we can define $I_{n}\subset \mathbb{N}$ and $i_{n}\in\mathbb{N}$ by
induction on $n\in\mathbb{N}$ as follows.

(1) $i_{n}=\mbox{min}I_{n};$

(2) $I_{n+1}\subset I_{n}-\{i_{n}\};$

(3) $m, k\in I_{n}\Rightarrow H_{m}^{n}=H_{k}^{n}.$\\
Let $K_{n}=H_{i_{n}}^{n}, q_{n}=h_{n}(K_{n})$. Then
$K_{n}=H_{m}^{n}$ for each  $m\in I_{n}$ and $q_{n}\in
K_{l}\supset K_{n}$ for each $l< n$. Since $x\in K_{n}\subset
V(q_{n}, O(n, q_{n}))$, there exists a $W_{n}\in\mathcal{W}_{x}$
such that $q_{n}\in W_{n}\subset O(n, q_{n})$ by the definition of
open uniform $(G)$.

Choose disjoint open sets $U_{x}$ and $U_{y}$ such that $x\in U_{x}$
and $y\in U_{y}$. Without loss of generality, we can assume that
there exists an infinite $J\subset \mathbb{N}$ such that, for each
$n\in J$, $q_{n}\not\in U_{x}$. Then $W_{n}\nsubseteq U_{x}$, and
therefore, $\{W_{n}: n\in J\}$ is finite. Hence, without loss of
generality, we can suppose that $W_{n}=W_{m}$ for any $n, m\in J$.
Thus $q_{m}\in O(i_{n}, q_{n})$, and $q_{m}=q_{n}$ by the definition
of $O(i_{n}, q_{n})$. Let $q_{n}=q$ for each $n\in J$. For each
$n\in J$, $x\in V(q, O(n, q))\subset O(n, q)\subset G(n, q)$, and
therefore, $x\in\bigcap_{n\in J}G(n, q)=\{q\}$, which is a
contradiction.

Now, we begin to show the Lemma. For each $n\in\mathbb{N}$, $x\in
X^{d}$, choose an $H(n, x)\in (\mathcal{H}_{n})_{x}$. For each $x\in
X$, define $g(n, x)$ by induction on $n$ as follows.
\[g(n+1, x)=\left\{
\begin{array}{lll}
V(x, H(n+1, x))\cap G(n+1, x)\cap g(n, x), & x\in X^{d},\\
\{x\}, & x\in I,\end{array}\right.\] where
\[g(1, x)=\left\{
\begin{array}{lll}
V(x, H(1, x))\cap G(1, x), & x\in X^{d},\\
\{x\}, & x\in I.\end{array}\right.\]
 Let $x\in X^{d}$ and
$\{x_{n}\}_{n}$ be a sequence with $x_{n}\in g(n, x)$ or $x\in g(n,
x_{n})$. We consider the following two cases.

Case 1: $\{n: x_{n}\in g(n, x)\}$ is infinite.

In this case, it is easy to show that the subsequence $\{x_{n}:
x_{n}\in g(n, x)\}$ converges to $x$.

 Case 2: $\{n: x_{n}\in g(n,
x)\}$ is finite.

In this case, we may assume that $x\in g(n, x_{n})$ for each $n\in
\mathbb{N}$. We show that the sequence $\{x_{n}\}_{n}$ itself
converges to $x$. Otherwise, there exists an open neighborhood $U$
of $x$ such that $\{x_{n}\}_{n}$ is not eventually in $U$. For each
$n\in \mathbb{N}$, since $x\in g(n, x_{n})$, we have $x_{n}\in
X^{d}$ and $x\in V(x_{n}, H(n, x_{n}))$. Hence, for each $n\in
\mathbb{N}$, there is a $W_{n}\in \mathcal{W}_{x}$ such that
$x_{n}\in W_{n}\subset H(n, x_{n})\subset \mbox{st}(x,
\mathcal{H}_{n})$. Let $M=\{n\in \mathbb{N}: x_{n}\not\in U\}$. Then
$M$ is infinite. Therefore, by the condition (3)in Definition 1.2,
$\{W_{n}: n\in M\}$ is finite set. Without loss of generality, we
can assume that $W_{n}=W_{m}$ for $n, m\in M$. Then,
$x_{n}\in\mbox{st}(x, \mathcal{H}_{m})$ for any $n, m\in
M$. Hence, $x_{n}\in\bigcap_{m\in M}\mbox{st}(x,
\mathcal{H}_{m})\cap X^{d}=\{x\}$ by Claim 2, which is a
contradiction.
\end{proof}

\begin{lemma}\label{l1}
If $X$ has an open uniform (G) at non-isolated points, then $X$ has
a point-countable base at non-isolated points.
\end{lemma}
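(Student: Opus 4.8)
\emph{Strategy.} The plan is to produce a base for $X$ which is point-countable at every non-isolated point, and to do this by refining a development at non-isolated points. The raw material is exactly what the proof of Lemma~\ref{l0} already constructs: the sequence $\{\mathcal{H}_{n}\}_{n}$ of open covers of $X$, each point-finite at non-isolated points, with $\mathcal{H}_{n+1}$ refining $\mathcal{H}_{n}$ and $X^{d}\cap\bigcap_{n}\mathrm{st}(x,\mathcal{H}_{n})=\{x\}$ for every $x\in X^{d}$ (Claim 2 there), together with the $g$-function $g$ of Lemma~\ref{l0}, which satisfies $g(n,x)\subseteq G(n,x)$ (so that $\{g(n,x)\}_{n}$ is a decreasing neighbourhood base at each $x\in X^{d}$) and has the stated convergence property; recall also that, as used in the proof of Lemma~\ref{l0}, $X^{d}$ is (hereditarily) metacompact.

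\emph{Step A: $\{\mathcal{H}_{n}\}_{n}$ is, possibly after replacing each $\mathcal{H}_{n}$ by a star-refinement, a development at non-isolated points.} We must show $\{\mathrm{st}(x,\mathcal{H}_{n})\}_{n}$ is a neighbourhood base at each $x\in X^{d}$. Suppose not: there are $x\in X^{d}$ and an open $U\ni x$ with $\mathrm{st}(x,\mathcal{H}_{n})\nsubseteq U$ for all $n$. Since each $\mathcal{H}_{n}$ is point-finite at $x$, the members of $\bigcup_{n}\mathcal{H}_{n}$ containing $x$ form a finitely branching tree under the parent relation of that construction, so there is a chain $H_{0}\supseteq H_{1}\supseteq\cdots$ with $x\in H_{n}\in\mathcal{H}_{n}$ and $H_{n}\nsubseteq U$; pick $p_{n}\in H_{n}\setminus U$ and the centre $x_{n}=h_{n}(H_{n})$, so that $x,p_{n}\in G(n,x_{n})$ and, applying (2) of Definition~\ref{d1} at $x_{n}$, there is $W_{n}\in\mathcal{W}_{x}$ with $x_{n}\in W_{n}\subset H_{n}$. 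One then arranges the extracted sequence $\{x_{n}\}_{n}$ (or $\{p_{n}\}_{n}$) so that it falls under the hypothesis of the convergence property of Lemma~\ref{l0} at $x$, hence has a subsequence converging to $x$, and contradicts this by playing it off against the finiteness, guaranteed by (3) of Definition~\ref{d1}, of the infinite subfamily $\{W_{n}\}_{n}$ of $\mathcal{W}_{x}$; the points of $I(X)$ that may sit in $\bigcap_{n}\mathrm{st}(x,\mathcal{H}_{n})$ are disposed of separately using $g(n,p)=\{p\}$ for $p\in I(X)$.

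\emph{Step B: the refinement device.} Let $\{\mathcal{P}_{n}\}_{n}$ be the development at non-isolated points from Step A. Each $\mathcal{P}_{n}$ restricted to $X^{d}$ is an open cover of $X^{d}$; since $X^{d}$ is metacompact it has a point-finite open refinement, which lifts to a family $\mathcal{Q}_{n}$ of open subsets of $X$ covering $X^{d}$, point-finite at non-isolated points, with each member of $\mathcal{Q}_{n}$ contained in a member of $\mathcal{P}_{n}$. Put $\mathcal{B}=\bigcup_{n}\mathcal{Q}_{n}\cup\mathcal{I}(X)$. This is a base: for $x\in X^{d}$ and open $U\ni x$ choose $n$ with $\mathrm{st}(x,\mathcal{P}_{n})\subseteq U$ and $Q\in\mathcal{Q}_{n}$ with $x\in Q$; then $Q$ lies in some $P\in\mathcal{P}_{n}$ with $x\in P\subseteq\mathrm{st}(x,\mathcal{P}_{n})\subseteq U$, so $x\in Q\subseteq U$, and for $x\in I(X)$, $\{x\}\in\mathcal{I}(X)$. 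Finally $\mathcal{B}$ is point-countable at non-isolated points, being a countable union of families each point-finite at non-isolated points together with $\mathcal{I}(X)$, whose members contain no non-isolated point.

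\emph{Main obstacle.} I expect the genuine difficulty to be Step A: converting the diagonal-type condition $X^{d}\cap\bigcap_{n}\mathrm{st}(x,\mathcal{H}_{n})=\{x\}$ together with the convergence property of $g$ into an honest development at non-isolated points. The delicate point is to arrange the extracted sequences (of centres $x_{n}$ and of escaping points $p_{n}$) so that the convergence property of Lemma~\ref{l0} genuinely applies, and to handle the isolated points that can occur in the stars; once Step A is in place, Steps B is routine bookkeeping.
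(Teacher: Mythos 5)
Your Step B is fine, but the whole weight of the proof sits on Step A, and Step A is exactly where the argument breaks down rather than being a deferred technicality. The convergence property of Lemma~\ref{l0} applies only to sequences $\{x_n\}$ with $x_n\in g(n,x)$ or $x\in g(n,x_n)$, where $g(n,y)$ is built from $V(y,H(n,y))\cap G(n,y)\cap g(n-1,y)$ for one \emph{chosen} element $H(n,y)\in(\mathcal{H}_n)_y$. For your centres $x_n=h_n(H_n)$ you only know $x\in H_n\subset V(x_n,O(n,x_n))\subset G(n,x_n)$; there is no reason whatever that $x\in V(x_n,H(n,x_n))$, i.e.\ that $x\in g(n,x_n)$, so Lemma~\ref{l0} does not apply to $\{x_n\}$ (nor to $\{p_n\}$), and no arrangement of the extracted sequence is indicated that would repair this. (Also, condition (2) of Definition~\ref{d1} gives $W_n\subset O(n,x_n)$, not $W_n\subset H_n$; and the parenthetical ``possibly after replacing each $\mathcal{H}_n$ by a star-refinement'' is unsupported --- the spaces here are only Hausdorff and metacompactness of $X^d$ yields point-finite, not star, refinements.) Note moreover that if Step A were available by such a short argument, the paper's entire chain Lemma~\ref{l1} -- Lemma~\ref{l2} -- Lemma~\ref{l3} would be superfluous: a point-finite (at non-isolated points) development at non-isolated points already gives a uniform base at non-isolated points by Lemma~\ref{l4}, which is the full theorem. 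The paper only obtains developability much later, via a Heath-type argument that itself uses the point-countable base of Lemma~\ref{l1} and the distance function of Lemma~\ref{l2}; what Claim 2 of Lemma~\ref{l0} gives is merely the diagonal-type condition $X^d\cap\bigcap_n\mathrm{st}(x,\mathcal{H}_n)=\{x\}$, which in a Hausdorff (non-regular) setting is far from a development.

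The paper's own proof of this lemma avoids your difficulty by refining the covers $\{g(n,x):x\in X\}$ themselves: metacompactness of $X^d$ gives covers $\mathcal{U}_n$ point-finite at non-isolated points with each $U\subset g(n,x_U)$, so that for $x\in U_n\in(\mathcal{U}_n)_x$ the membership $x\in g(n,x_{U_n})$ needed for Lemma~\ref{l0} is automatic; then $\mathcal{B}=\{U\cap W(m,x_U): U\in\mathcal{U}_n,\ n,m\in\mathbb{N}\}$ together with $\mathcal{I}(X)$ is point-countable at non-isolated points, and the base property at a non-isolated $x$ follows by letting $x_{U_n}\to x$, picking $x_i\in V(x,O)$, and using $W(m,x_i)\subset O$ from condition (2) of Definition~\ref{d1}. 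If you want to salvage your outline, you should abandon the attempt to show $\{\mathcal{H}_n\}$ is a development and instead work directly with the $g$-function in this way.
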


\begin{proof}
Let $\mathcal{W}=\{\mathcal{W}_{x}: x\in X\}$ be an open uniform (G)
at non-isolated points for $X$, where $\mathcal{W}_{x}=\{W(n,
x)\}_{n}$. Let $g$ be a $g$-function satisfying the conditions in
Lemma~\ref{l0}. For each $n\in \mathbb{N}$ and the open covering
$\{g(n ,x): x\in X\}$ of $X$, since $X^{d}$ is metacompact, there
exists an open covering $\mathcal{U}_{n}$ such that
$\mathcal{U}_{n}$ is point-finite at non-isolated points and refines
$\{g(n ,x): x\in X\}$. For each $U\in \mathcal{U}_{n}$, there is a
$x_{U}\in X$ such that $U\subset g(n, x_{U})$. Let

$\mathcal{B}_{n, m}=\{U\cap W(m, x_{U}): U\in \mathcal{U}_{n}\},
m\in\mathbb{N}$;

$\mathcal{B}=\bigcup_{n, m\in \mathbb{N}}\mathcal{B}_{n, m}$.\\
Then $\mathcal{B}$ is an open collection of subsets of $X$ and
point-countable at non-isolated points of $X$. We now show that
$\mathcal{B}\cup \mathcal{I}(X)$ is point-countable base at
non-isolated points for $X$. Indeed, for each $x\in X^{d}$ and $x\in
O\in\tau (X)$, choose an $U_{n}\in (\mathcal{U}_{n})_{x}$ for each
$n\in \mathbb{N}$. We denote $x_{n}=x_{U_{n}}$. Then $x\in g(n,
x_{n})$, and hence sequence $\{x_{n}\}_{n}$ converges to $x$.
Therefore, there exists an $i\in \mathbb{N}$ such that $x_{i}\in
V(x, O)$. Since $x\in g(i, x_{i})$, we have $x_{i}\in X^{d}$ and
there is an $m\in \mathbb{N}$ such that $x\in W(m, x_{i})$. Thus
$x\in U_{i}\cap W(m, x_{i})\subset O$.
\end{proof}

Put $R^{+}=\{x\in\mathbb{R}: x\geq 0\}$.

\begin{lemma}\label{l2}
If $X$ has an open uniform (G) at non-isolated points, then there
exists a function $d: X\times X\rightarrow R^{+}$ such that, for
each $x\in X^{d}$, $x\in B(x, \frac{1}{n})$ and $\{\mbox{int}(B(x,
\frac{1}{n}))\}_{n}$ is a decreasing neighborhood base at $x$, where
$B(x, \frac{1}{n})=\{y\in X:d(x, y)<\frac{1}{n}\}$.
\end{lemma}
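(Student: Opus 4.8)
The plan is to read $d$ off directly from the $g$-function produced by Lemma~\ref{l0}. Fix such a $g$, so that for every $x\in X^{d}$ and every sequence $\{x_{n}\}_{n}$ with $x_{n}\in g(n,x)$ or $x\in g(n,x_{n})$ some subsequence of $\{x_{n}\}_{n}$ converges to $x$. For $x\neq y$ in $X$ I would set
\[
d(x,y)=\inf\Big(\{1\}\cup\{1/n: n\in\mathbb{N},\ y\in g(n,x)\ \mbox{or}\ x\in g(n,y)\}\Big),
\]
and $d(x,x)=0$. Then $d$ maps into $[0,1]\subset R^{+}$ and is symmetric, since the condition imposed on $n$ is symmetric in $x$ and $y$. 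From $d(x,x)=0$ we get $x\in B(x,\frac1n)$ for all $x$ and $n$, and because the thresholds $\frac1n$ decrease we have $B(x,\frac1{n+1})\subseteq B(x,\frac1n)$, so $\{\mbox{int}(B(x,\frac1n))\}_{n}$ is automatically a decreasing sequence. Hence it only remains to check, for each $x\in X^{d}$, that each $\mbox{int}(B(x,\frac1n))$ is a neighbourhood of $x$ and that these interiors are cofinal in the neighbourhood filter of $x$.

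For the first point I would show $g(n+1,x)\subseteq B(x,\frac1n)$: if $y\in g(n+1,x)$, the index $n+1$ lies in the set defining $d(x,y)$, so $d(x,y)\leq\frac1{n+1}<\frac1n$ and $y\in B(x,\frac1n)$. Since $g(n+1,x)$ is open and contains $x$, this places $x$ in $\mbox{int}(B(x,\frac1n))$.

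For the second point, fix an open $U\ni x$; it suffices to find $n$ with $B(x,\frac1n)\subseteq U$, for then $\mbox{int}(B(x,\frac1n))\subseteq U$. If no such $n$ existed, choose $x_{n}\in B(x,\frac1n)\setminus U$ for each $n$. Then $d(x,x_{n})<\frac1n$, and since the infimum of a nonempty set of reals can be below $\frac1n$ only when the set already contains an element below $\frac1n$, there is $k>n$ with $x_{n}\in g(k,x)$ or $x\in g(k,x_{n})$; the monotonicity of $g$ in its first coordinate then gives $x_{n}\in g(n,x)$ or $x\in g(n,x_{n})$. Thus $\{x_{n}\}_{n}$ satisfies the hypothesis of Lemma~\ref{l0}, so a subsequence of $\{x_{n}\}_{n}$ converges to $x$ --- impossible, since every $x_{n}$ avoids the open neighbourhood $U$ of $x$. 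Hence some $B(x,\frac1n)$ is contained in $U$, which finishes the verification.

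The only genuinely delicate point is orienting the disjunction in the definition of $d$ correctly: the clause ``$y\in g(n,x)$'' is exactly what makes $g(n+1,x)$ a ball-neighbourhood of $x$ in the first part, while the clause ``$x\in g(n,y)$'' is what lets the sequence extracted in the second part fall under the ``$x\in g(n,x_{n})$'' alternative of Lemma~\ref{l0}; keeping both clauses together also yields the symmetry of $d$. Everything else --- passing from the strict inequality $d(x,x_{n})<\frac1n$ to an actual index $k>n$ in the defining set, and the monotonicity bookkeeping --- is routine.
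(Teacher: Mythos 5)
Your proof is correct and takes essentially the same route as the paper: the paper's own argument also reads $d$ directly off the $g$-function of Lemma~\ref{l0}, setting $d(x,y)=1/m(x,y)$ where $m(x,y)$ is the first $n$ with $y\notin g(n,x)$ and $x\notin g(n,y)$, which differs from your infimum formula only by an index shift (your version is in fact slightly more robust, since it needs no argument that such an $n$ exists). Both verifications coincide: $g(n+1,x)\subset B(x,\frac{1}{n})$ gives $x\in\mbox{int}(B(x,\frac{1}{n}))$, and a failure of cofinality yields a sequence $\{x_{m}\}$ with $x_{m}\in g(m,x)$ or $x\in g(m,x_{m})$ avoiding $U$, contradicting Lemma~\ref{l0}.
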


\begin{proof}
Let $g$ be the $g$-function constructed in the proof of
Lemma~\ref{l0}. For any distinct points $x, y\in X$, put
$$m(x, y)=\mbox{min}\{n\in \mathbb{N}: y\not\in g(n, x)\ \mbox{and}\ x\not\in g(n, y)\}.$$
Define $d: X\times X\rightarrow R^{+}$ as follows.
\[d(x, y)=\left\{
\begin{array}{lll}
0, & x=y,\\
\frac{1}{m(x, y)}, & x\neq y.\end{array}\right.\] Then, for each
point $x\in X^{d}$ and $n\in \mathbb{N}$, $x\in \mbox{int}(B(x,
\frac{1}{n}))$. Indeed, since $m(x, y)> n$ for each $y\in g(n, x)$,
$d(x, y)<\frac{1}{n}$. Then $y\in B(x, \frac{1}{n})$, and therefore,
$x\in g(n, x)\subset B(x, \frac{1}{n})$. It follows that
$x\in\mbox{int}(B(x, \frac{1}{n}))$. For each $x\in X^{d}$ and $x\in
U$, there exists an $m\in \mathbb{N}$ such that $B(x,
\frac{1}{m})\subset U$. Otherwise, suppose that $B(x,
\frac{1}{m})\nsubseteq U$ for each $m\in \mathbb{N}$. Choose a point
$x_{m}\in B(x, \frac{1}{m})\setminus U$ for each $m\in \mathbb{N}$.
Then $d(x, x_{m})<\frac{1}{m}$, and hence $x\in g(m, x_{m})$ or
$x_{m}\in g(m, x)$. By Lemma~\ref{l0}, $\{x_{m}\}_{m}$ has a
subsequence converging to $x$. It contradicts the fact that
$x_{m}\not\in U$ for each $m\in \mathbb{N}$.
\end{proof}

\begin{lemma}\label{l3}
If $X$ has an open uniform (G) at non-isolated points, then $X$ is a
developable space at non-isolated points.
\end{lemma}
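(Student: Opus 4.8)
The plan is to show that $\{\mathcal{P}_{n}\}_{n}$, where $\mathcal{P}_{n}=\{g(n,x):x\in X\}$ and $g$ is the $g$-function constructed in the proof of Lemma~\ref{l0} (so $g(n,x)=\{x\}$ when $x\in I$), is a development at non-isolated points for $X$. Each $\mathcal{P}_{n}$ is an open cover of $X$, and since $g(n+1,\cdot)\subseteq g(n,\cdot)$ the stars $\mbox{st}(x,\mathcal{P}_{n})$ decrease with $n$. Keeping the notation of the proof of Lemma~\ref{l0}: if $x\in g(n,y)$ then $x\in g(n,y)\subseteq V(y,H(n,y))\subseteq H(n,y)\in\mathcal{H}_{n}$, so $g(n,y)\subseteq H(n,y)\subseteq\mbox{st}(x,\mathcal{H}_{n})$; hence $\mbox{st}(x,\mathcal{P}_{n})\subseteq\mbox{st}(x,\mathcal{H}_{n})$, and by Claim~2 in that proof $X^{d}\cap\bigcap_{n}\mbox{st}(x,\mathcal{P}_{n})=\{x\}$ for each $x\in X^{d}$ (the argument there in fact gives $\bigcap_{n}\mbox{st}(x,\mathcal{H}_{n})=\{x\}$). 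Thus it remains only to prove: for every $x\in X^{d}$ and every open $U\ni x$ there is $N$ with $\mbox{st}(x,\mathcal{P}_{N})\subseteq U$.

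Suppose not. Then for each $n$ there is $y_{n}$ with $x\in g(n,y_{n})$ and some $z_{n}\in g(n,y_{n})\setminus U$. From $x\in g(n,y_{n})$ and $x\notin I$ we get $y_{n}\in X^{d}$; the sequence $\{y_{n}\}_{n}$ satisfies the hypothesis of Lemma~\ref{l0} (the case $x\in g(n,y_{n})$), so after passing to a subsequence I may assume $y_{n}\to x$, the relations $x,z_{n}\in g(n,y_{n})$ persisting because $g$ is decreasing in its first argument. Applying (2) of Definition~\ref{d1} to the pair $(y_{n},H(n,y_{n}))$ and the non-isolated point $x$ yields $W_{n}\in\mathcal{W}_{x}$ with $y_{n}\in W_{n}\subseteq H(n,y_{n})\subseteq\mbox{st}(x,\mathcal{H}_{n})$. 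If $\{W_{n}:n\}$ were finite, then on a subsequence $W_{n}=W$, whence $W\subseteq\bigcap_{n}H(n,y_{n})\subseteq\bigcap_{n}\mbox{st}(x,\mathcal{H}_{n})=\{x\}$ and the open set $W\in\mathcal{W}_{x}$ would be $\{x\}$, forcing $x\in I$ --- impossible. So $\{W_{n}:n\}$ is infinite, hence a network at $x$ by (3) of Definition~\ref{d1}. The remaining step is to combine this with $z_{n}\in g(n,y_{n})\subseteq H(n,y_{n})$, $x\in H(n,y_{n})$, $z_{n}\notin U$; imitating Case~2 and Claim~2 in the proof of Lemma~\ref{l0}, I would descend the chains of $\mathcal{H}$-elements through $x$ that carry the $z_{n}$, stabilise along a subsequence using point-finiteness of the $\mathcal{H}_{n}$ at $x$, apply (2) of Definition~\ref{d1} to put the relevant centre points inside members of $\mathcal{W}_{x}$ (and of $\mathcal{W}_{z_{n}}$ when $z_{n}\in X^{d}$), and apply (3) to collapse these to a single member, which would drive the $z_{n}$ into $\bigcap_{m}\mbox{st}(x,\mathcal{H}_{m})=\{x\}$, giving $z_{n}=x\in U$ and contradicting $z_{n}\notin U$.

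The first paragraph, the extraction of $y_{n}\to x$ and the ``$\{W_{n}\}$ is infinite'' observation are routine. The hard part will be the last step. The trouble is that the witness $z_{n}$ of $g(n,y_{n})\not\subseteq U$ is linked to $x$ only through $y_{n}$ --- a two-step relation --- and since neither $g$ nor the function $d$ of Lemma~\ref{l2} obeys a triangle inequality one cannot simply conclude $z_{n}\to x$; moreover condition (2) always puts the \emph{centre} of the open set it is applied to inside the resulting member of $\mathcal{W}_{x}$, so to capture $z_{n}$ itself inside such a member one must arrange for $z_{n}$ to be a centre. I expect the genuinely delicate case to be that of infinitely many pairwise distinct non-isolated $z_{n}$, and that dealing with it requires point-finiteness of the covers $\mathcal{H}_{n}$ --- equivalently, the metacompactness of $X^{d}$ exploited through the refinements of Lemma~\ref{l1} --- in addition to condition (3).
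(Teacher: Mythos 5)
There is a genuine gap, and you in fact flag it yourself: the entire content of the lemma lies in the step you leave unproved, namely producing an $N$ with $\mbox{st}(x,\mathcal{P}_{N})\subseteq U$. Everything before that (openness of the covers, $X^{d}\cap\bigcap_{n}\mbox{st}(x,\mathcal{P}_{n})=\{x\}$, extraction of $y_{n}\to x$, infiniteness of $\{W_{n}\}$) is routine and does not touch the obstruction you correctly identify: the witness $z_{n}\in g(n,y_{n})\setminus U$ is tied to $x$ only through the centre $y_{n}$, condition (2) of Definition~\ref{d1} only places \emph{centres} into members of $\mathcal{W}_{x}$, and neither $g$ nor the function $d$ of Lemma~\ref{l2} satisfies a triangle-type inequality. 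Your proposed remedy (``arrange for $z_{n}$ to be a centre'', descend $\mathcal{H}$-chains, stabilise, collapse via (3)) is only a hope, not an argument; nothing in the construction of Lemma~\ref{l0} makes $z_{n}$ a centre of anything containing $x$, so there is no member of $\mathcal{W}_{x}$ or $\mathcal{W}_{z_{n}}$ available to compare, and it is not even clear that the specific covers $\{g(n,x):x\in X\}$ from Lemma~\ref{l0} form a development at all. (A smaller slip: in ruling out finiteness of $\{W_{n}\}$ you use Claim~2 as if $\bigcap_{n}\mbox{st}(x,\mathcal{H}_{n})=\{x\}$, but Claim~2 only controls points of $X^{d}$; $W\cap X^{d}=\{x\}$ does not force $x\in I$, since $W$ may consist of $x$ together with isolated points.)

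The paper resolves exactly this two-step difficulty by a different route: it first extracts a point-countable base at non-isolated points (Lemma~\ref{l1}) and the semi-metric-like function $d$ (Lemma~\ref{l2}), then well-orders $X^{d}$ and builds a \emph{new} $g$-function $g(n,x)=V_{n}(x)\cap\bigl(\bigcap\{h(i,p(i,x)):i\leq n\}\bigr)\cap\bigl(\bigcap\{U_{j}(p(i,x)):i,j\leq n,\ x\in U_{j}(p(i,x))\}\bigr)$, where $p(n,x)$ is the $<$-least $y$ with $x\in h(n,y)$. The point of the minimal-centre trick and of intersecting with the finitely many basic sets $U_{j}(p(i,x))$ is precisely to force, for large $i$, the equality $p(l,x_{i})=p(l,x)$ and hence $g(i,x_{i})\subseteq U_{k}(p(l,x))\subseteq U$; this is how the ``two-step'' relation $z\in g(n,y)\ni x$ gets controlled (the classical Heath-style argument that point-countable base plus such a distance function yields a development). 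If you want to complete your approach you would essentially have to reproduce this machinery; as written, your sketch stops exactly where the proof has to begin.
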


\begin{proof}
By Lemma~\ref{l1}, let $\mathcal{U}$ be a point-countable base at
non-isolated points for $X$. Endow $X^{d}$ with a well-order by
$(X^{d}, <)$. Let $d: X\times X\rightarrow R^{+}$ be the function
defined in the proof of Lemma~\ref{l2}. For each $x\in X^{d}$, let
$(\mathcal{U})_{x}=\{U_{n}(x)\}_{n}$. For each $n\in \mathbb{N}$,
put

$V_{n}(x)=\mbox{int}(B(x, \frac{1}{n}))$;

$h(n, x)=U_{n}(x)\cap V_{n}(x)$;

$p(n, x)=\mbox{min}\{y\in X^{d}: x\in h(n, y)\};$

$g(n, x)=V_{n}(x)\cap (\cap\{h(i, p(i, x)): i\leq n\})\cap
(\cap\{U_{j}(p(i, x)): i, j\leq n, x\in U_{j}(p(i, x))\})$;

$\varphi_{n}=\{g(n, x): x\in X^{d}\}\cup \{g(n, x)=\{x\}: x\in
I\}$.\\
Then $\{\varphi_{n}\}_{n}$ is a development at non-isolated points.
Indeed, suppose not, there exists a point $x\in X^{d}$ and an open
neighborhood $U$ of $x$ such that there is $x_{i}\in X^{d}$
satisfying $x\in g(i, x_{i})\nsubseteq U$ for each $i\in
\mathbb{N}$. Since $x\in V_{i}(x_{i})$, $x_{i}\rightarrow x$. It
follows from Lemma~\ref{l2} that there exist $l, m\in \mathbb{N}$
such that $B(x, \frac{1}{l})\subset U_{m}(x)\subset U$. For each
$y\in X^{d}$, if $x\in h(l, y)\subset V_{l}(y)$, then $y\in B(x,
\frac{1}{l})\subset U_{m}(x)$. It follows that $p(l, x)\in
U_{m}(x)$, and therefore, there exists a $k\in \mathbb{N}$ such that
$U_{m}(x)=U_{k}(p(l, x))$. Since $U_{k}(p(l, x))\cap h(l, p(l, x))$
is an open neighborhood at $x$, there is an $i_{0}\in \mathbb{N}$
such that, for each $i\geq i_{0}$, $x_{i}\in U_{k}(p(l, x))\cap h(l,
p(l, x))$. Thus $p(l, x_{i})\leq p(l, x)$ for $i\geq i_{0}$, and on
the other hand, $x\in g(i, x_{i})\subset h(l, p(l, x_{i}))$ for
$i\geq l$. Then $p(l, x)\leq p(l, x_{i})$ for each $i\geq l$.
Therefore, $p(l, x_{i})=p(l, x)$ for $i\geq \mbox{max}\{i_{0}, l\}$.
It follows that $x_{i}\in U_{k}(p(l, x_{i}))$, and therefore, for
$i\geq \mbox{max}\{i_{0}, l, k\}$, $g(i, x_{i})\subset U_{k}(p(l,
x_{i}))=U_{k}(p(l, x))=U_{m}(x)\subset U$, which is a contradiction.
\end{proof}

\begin{theorem}
A space $X$ has an open uniform (G) at non-isolated points if and
only if
 $X$ has an uniform base at non-isolated points.
\end{theorem}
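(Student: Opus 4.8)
The plan is to prove the two implications in turn. Since the three lemmas just proved all aim at the ``non-isolated'' versions of developability, point-countable bases and (via Lemma~\ref{l4}) uniform bases, the forward implication will essentially be an assembly of them together with one classical fact; the converse will be a direct construction starting from Lemma~\ref{l4}.

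For the necessity (open uniform (G) at non-isolated points $\Rightarrow$ uniform base at non-isolated points), I would argue as follows. By Lemma~\ref{l3}, $X$ has a development at non-isolated points $\{\mathcal{G}_{n}\}_{n}$. First I would observe that this descends to the subspace $X^{d}$: setting $\mathcal{G}_{n}'=\{G\cap X^{d}:G\in\mathcal{G}_{n}\}$, one has $\mbox{st}(x,\mathcal{G}_{n}')=X^{d}\cap\mbox{st}(x,\mathcal{G}_{n})$ for each $x\in X^{d}$, so $\{\mathcal{G}_{n}'\}_{n}$ is a development for the space $X^{d}$. Likewise, by Lemma~\ref{l1} the trace on $X^{d}$ of a point-countable base at non-isolated points for $X$ is a point-countable base for the subspace $X^{d}$. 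Now I would invoke the classical fact that, for a developable space, having a point-countable base is equivalent to being metacompact (equivalently, to having a uniform base); hence $X^{d}$ is a metacompact subspace of $X$. Since $X$ also has a development at non-isolated points, condition (4) of Lemma~\ref{l4} holds, and therefore $X$ has a uniform base at non-isolated points.

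For the sufficiency, I would start, via Lemma~\ref{l4}, from a point-finite development at non-isolated points $\{\mathcal{Q}_{n}\}_{n}$; passing to $\mathcal{P}_{n}=\{Q_{1}\cap\cdots\cap Q_{n}:Q_{i}\in\mathcal{Q}_{i}\}$ one still has a development at non-isolated points with each $\mathcal{P}_{n}$ point-finite at every non-isolated point and with $\{\mbox{st}(x,\mathcal{P}_{n})\}_{n}$ a decreasing neighborhood base at every $x\in X^{d}$. I would then put $\mathcal{W}_{x}=\bigcup_{n}(\mathcal{P}_{n})_{x}$ for $x\in X^{d}$ (where $(\mathcal{P}_{n})_{x}=\{P\in\mathcal{P}_{n}:x\in P\}$) and $\mathcal{W}_{x}=\{x\}$ for $x\in I(X)$, and verify the three conditions of Definition~\ref{d1}. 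Condition (1) is immediate, $\mathcal{W}_{x}$ being a countable union of finite families of open sets each containing $x$. For (3), an infinite subfamily of $\mathcal{W}_{x}$ must meet $(\mathcal{P}_{n})_{x}$ for infinitely many $n$, and each member of $(\mathcal{P}_{n})_{x}$ contains $x$ and is contained in $\mbox{st}(x,\mathcal{P}_{n})$, so such a subfamily is a neighborhood base, hence a network, at $x$. For (2), given $x\in U\in\tau(X)$ with $x\in X^{d}$, I would take $V(x,U)=\mbox{st}(x,\mathcal{P}_{n})$ for some $n$ with $\mbox{st}(x,\mathcal{P}_{n})\subseteq U$: if $y\in V(x,U)\cap X^{d}$ then some $P\in\mathcal{P}_{n}$ contains both $x$ and $y$, so $P\in\mathcal{W}_{y}$ and $x\in P\subseteq U$; for $x\in I(X)$ the choice $V(x,U)=\{x\}$ makes (2) vacuous since $\{x\}\cap X^{d}=\emptyset$.

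The substantive content has already been spent in Lemma~\ref{l3}, so the theorem itself is mainly bookkeeping; the step I expect to require the most care is in the necessity part, namely verifying that the ``at non-isolated points'' hypotheses genuinely yield a development and a point-countable base for the subspace $X^{d}$ (so that the classical developable/point-countable-base/metacompact equivalence can legitimately be applied) and that the metacompactness of $X^{d}$ it produces is exactly what clause (4) of Lemma~\ref{l4} asks for. Both checks are routine once one notes that every point of $X^{d}$ lies in some member of each $\mathcal{G}_{n}$ and in only countably many members of the chosen base.
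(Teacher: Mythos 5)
Your sufficiency argument is correct, and it is a genuinely different (and self-contained) route from the paper's: you build the families $\mathcal{W}_{x}=\bigcup_{n}(\mathcal{P}_{n})_{x}$ from a point-finite development at non-isolated points obtained via Lemma~\ref{l4}, whereas the paper first proves that a uniform base at non-isolated points is automatically point-countable at non-isolated points and then takes $\mathcal{W}_{x}=(\mathcal{B})_{x}$ directly. Both work; your verification of conditions (1)--(3) of Definition~\ref{d1} is sound (the intersection trick making the stars decrease is exactly what condition (3) needs).

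The necessity direction, however, has a genuine gap at the step where you ``invoke the classical fact that, for a developable space, having a point-countable base is equivalent to being metacompact (equivalently, to having a uniform base).'' The classical Aleksandrov--Arhangel'ski\u{\i} theorem says that a space has a uniform base if and only if it is metacompact and developable; it does not say that a developable space with a point-countable base is metacompact. A uniform base is point-countable, but a point-countable base is in general strictly weaker, and by itself it only yields metaLindel\"ofness; the implication ``developable $+$ point-countable base $\Rightarrow$ metacompact'' is not a standard quotable theorem, and you give no proof of it, so the metacompactness of $X^{d}$ --- which is exactly what clause (4) of Lemma~\ref{l4} requires --- is not established. The gap can be closed without Lemma~\ref{l1} at all: if $\mathcal{W}=\{\mathcal{W}_{x}:x\in X\}$ is an open uniform (G) at non-isolated points, then the traces $\{\{W\cap X^{d}:W\in\mathcal{W}_{x}\}:x\in X^{d}\}$ satisfy conditions (1)--(3) of the \emph{full} open uniform (G) for the subspace $X^{d}$ (in condition (2) the relevant $y$ range precisely over $V(x,U)\cap X^{d}$), so by the well-known equivalence quoted in the introduction $X^{d}$ has a uniform base and is therefore metacompact; combining this with Lemma~\ref{l3} and Lemma~\ref{l4}(4) finishes the necessity. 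This is in effect what the paper's proof does when it asserts that $X^{d}$ is metacompact.
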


\begin{proof}
Necessity. By Lemma~\ref{l3}, $X$ has a development at non-isolated
points. Since $X^{d}$ is metacompact, $X$ has an uniform base at
non-isolated points by Lemma~\ref{l4}.

Sufficiency. Let $\mathcal{B}$ be an uniform base at non-isolated
points for $X$. If $\mathcal{B}$ is point-countable at non-isolated
points for $X$, then $\mathcal{W}=\{(\mathcal{B})_{x}: x\in
X^{d}\}\cup\mathcal{I}(X)$ is an open uniform (G) at non-isolated
points for $X$. Suppose that there exists a point $x\in X^{d}$ such
that $(\mathcal{B})_{x}$ is uncountable. If $z\in X-\{x\}$, then
$\{B\in (\mathcal{B})_{x}: z\in B\}$ is finite. Hence there are an
infinite subset $\{B_{n}: n\in \mathbb{N}\}\subset
(\mathcal{B})_{x}, x_{n}\in B_{n}-\{x\}$ for each $n\in \mathbb{N}$,
and some $k\in \mathbb{N}$ such that $x_{n}$ belongs to just $k$
many elements of $(\mathcal{B})_{x}$. Then $x_{n}\rightarrow x$ as
$n\rightarrow\infty$. Since $\mathcal{B}$ is a base for $X$, there
exists an infinite subfamily $\{B_{i}^{\prime}: i\in \mathbb{N}\}$
of $\mathcal{B}$ and a subsequence $\{x_{n_{i}}\}_{i}$ such that
$\{x_{n_{j}}: j\geq i\}\subset B_{i}^{\prime}\subset X-\{x_{n_{j}}:
j< i\}$ for $i\in \mathbb{N}$. Then $x_{n_{i}}$ belongs to $i$ many
elements of $(\mathcal{B})_{x}$, which is a contradiction.
\end{proof}

\vskip 0.5cm
\section{Inverse image of spaces with uniform bases at non-isolated points}
In this section, we mainly discuss the inverse image of spaces with
uniform bases at non-isolated points.

\begin{definition}
Let $X$ be a topological space.
\begin{enumerate}
\item $X$ is called a {\it $w\triangle$-space at non-isolated
points} if there exists a sequence $\{\mathcal{U}_{n}\}_{n}$ of open
covers such that, for every $x\in X-I$, whenever $x_{n}\in
\mbox{st}(x, \mathcal{U}_{n})$, then $\{x_{n}\}_{n}$ has a cluster
point.

\item $X$ is said to have a $G_{\delta}$-{\it diagonal at non-isolated
points} if there exists a sequence $\{\mathcal{U}_{n}\}_{n}$ of open
covers such that $\bigcap_{n\in\mathbb{N}}\mbox{st}(x,
\mathcal{U}_{n})=\{x\}$ for every $x\in X-I$. Moreover, $X$ is said
to have a $G_{\delta}^{\ast}$-{\it diagonal at non-isolated points}
if we replace ``$\bigcap_{n\in\mathbb{N}}\mbox{st}(x,
\mathcal{U}_{n})=\{x\}$'' by
``$\bigcap_{n\in\mathbb{N}}\overline{\mbox{st}(x,
\mathcal{U}_{n})}=\{x\}$''.
\end{enumerate}
\end{definition}

It is obvious that
\begin{enumerate}
\item $X$ is developable at non-isolated points~$\Rightarrow$ $X$ is a $\mbox{w}\triangle$-space at non-isolated points;

\item $X$ has a $G_{\delta}^{\ast}$-diagonal at non-isolated points~$\Rightarrow
X$ has a $G_{\delta}$-diagonal at non-isolated points;
\end{enumerate}

\begin{example}
There exists a perfect map from a space $X$ onto a metric space,
where $X$ has not any uniform base at non-isolated points.
\end{example}

\begin{proof}
Let $X=[0, 1]\times \{0, 1\}$ and endow $X$ with the lexicographic
ordered space. Let $f:X\rightarrow [0, 1]$ be a naturally projective
map, where $[0, 1]$ endowed with the usual topology. Since $X$ is
compact, $f$ is a closed and 2-to-one map. $X$ does not have an
uniform base at non-isolated points since $X$ has no uniform base
and does not contain any isolated points.

From this example it can be seen that a closed and 2-to-one map does
not inversely preserve spaces with an uniform base at non-isolated
points.
\end{proof}

\begin{example}
There exists an open and $\leq$2-to-one map from a space $X$ onto a
metric space, where $X$ has not any uniform base at non-isolated
points.
\end{example}

\begin{proof}
Y. Tanaka in \cite[Example 3.7]{Ta} constructed a regular space $X$
which is the inverse image of a compact metric space under an open
and $\leq$2-to-one map, but $X$ is not a first countable space.
Hence $X$ has not any uniform base at non-isolated points.
\end{proof}

\begin{example}
Open and closed map doesn't inversely preserve spaces with uniform
base at non-isolated points.
\end{example}

\begin{proof}
Let $X=[0, \omega_{1}]$ be an usually ordered space. Put
$f:X\rightarrow X/ X$ be a quotient map by identifying $X$ to a
single point. Then it is obvious that $f$ is an open and closed map.
But $X$ has not any uniform base at non-isolated points.
\end{proof}

We don't know whether spaces with an uniform base at non-isolated
points are inversely preserved by an open, closed and finite-to-one
map. So we have the following question.

\begin{question}
Are spaces with an uniform base at non-isolated points inversely
preserved by open, closed and finite-to-one maps?
\end{question}

By slightly modifying the proof in \cite[Theorem 6]{MH}, we can
obtain the following.

\begin{theorem}
Let $f:X\rightarrow Y$ be a closed, finite-to-one and local
homeomorphism map, where $Y$ has an uniform base at non-isolated
points. Then $X$ has an uniform base at non-isolated points.
\end{theorem}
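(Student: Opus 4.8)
The plan is to transfer a point-finite development at non-isolated points from $Y$ to $X$ via the map $f$, using Lemma~\ref{l4} in both directions. By Lemma~\ref{l4}, since $Y$ has a uniform base at non-isolated points, $Y$ has a point-finite development at non-isolated points $\{\mathcal{V}_n\}_n$. The goal is to produce a sequence $\{\mathcal{U}_n\}_n$ of open covers of $X$ that is a point-finite development at non-isolated points, and then invoke Lemma~\ref{l4} again (in the direction (3)$\Rightarrow$(2)) to conclude that $X$ has a uniform base at non-isolated points. The first observation to record is that $f$ does not raise isolated points in a harmful way: if $x \in X$ is non-isolated, then since $f$ is a local homeomorphism, $f(x)$ is non-isolated in $Y$ (a local homeomorphism is an open map, and locally $f$ is a homeomorphism near $x$, so an isolated $f(x)$ would force $x$ isolated). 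Conversely I need that $X^d$ is metacompact, which will come out of the development plus Lemma~\ref{l4}, so the real work is building the development.

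Next I would do the standard local-homeomorphism bookkeeping. For each $x \in X$ choose an open neighborhood $O_x$ with $f|O_x : O_x \to f(O_x)$ a homeomorphism onto an open set. Since $f$ is finite-to-one, write $f^{-1}(y) = \{x_1(y),\dots,x_{k(y)}(y)\}$; using that $X$ is Hausdorff and $f$ is closed, one gets for each $y$ an open neighborhood $W_y$ of $y$ in $Y$ such that $f^{-1}(W_y) = \bigcup_{i} U_i$ is a disjoint union of open sets $U_i \ni x_i(y)$ with $f|U_i$ a homeomorphism onto $W_y$ — this is the usual consequence of $f$ being a closed finite-to-one local homeomorphism (a ``covering-like'' local structure). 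Then I would define, for each $n$, the cover $\mathcal{U}_n$ of $X$ to consist of all sets of the form $U \cap f^{-1}(V)$ where $V \in \mathcal{V}_n$ and $U$ ranges over the pieces $U_i$ coming from the $W_y$'s (intersected with a fixed refinement so that each piece maps homeomorphically and lies inside some $W_y$). The key point is that on each such piece $f$ is a homeomorphism, so $\mathrm{st}(x,\mathcal{U}_n)$ is controlled by $\mathrm{st}(f(x),\mathcal{V}_n)$ together with the finitely many sheets over a neighborhood.

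The two things to verify are: (a) $\{\mathcal{U}_n\}_n$ is a development at each non-isolated $x \in X$, and (b) each $\mathcal{U}_n$ is point-finite at each non-isolated point of $X$. For (a): given $x \in X^d$ and an open $G \ni x$, shrink $G$ inside one homeomorphic piece $U_0 \ni x$ with $f(U_0)$ open; since $f(x)$ is non-isolated and $\{\mathrm{st}(f(x),\mathcal{V}_n)\}_n$ is a neighborhood base at $f(x)$, pick $n$ with $\mathrm{st}(f(x),\mathcal{V}_n) \subset f(G \cap U_0)$; then every element of $\mathcal{U}_n$ containing $x$ is of the form $U \cap f^{-1}(V)$ with $f(x) \in V$, hence $V \subset \mathrm{st}(f(x),\mathcal{V}_n) \subset f(G\cap U_0)$, and combined with the finiteness of sheets one forces $\mathrm{st}(x,\mathcal{U}_n) \subset G$ — this last step needs care because $f^{-1}(V)$ has several sheets, so I must arrange (by passing to $W_{f(x)}$ as above) that the sheets are separated and only the sheet through $x$ is relevant. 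For (b): if $x \in X^d$ lies in $U \cap f^{-1}(V)$, then $f(x) \in V$, and $f(x)$ is non-isolated, so $f(x)$ lies in only finitely many $V \in \mathcal{V}_n$; over each such $V$ (shrunk to a $W$) there are only finitely many sheets, and $x$ lies in at most one sheet per $V$ — hence $x$ lies in finitely many members of $\mathcal{U}_n$. The main obstacle is the sheet-separation argument: making precise, uniformly in $n$, that the finitely many preimage points of a given $y$ can be boxed off by disjoint open sets each mapped homeomorphically, and that the covers $\mathcal{U}_n$ can be chosen to respect this boxing so that stars in $X$ really do shrink with stars in $Y$; this is exactly where closedness and finite-to-one are used (openness alone, as Example~3.3 shows, is not enough), and it is the heart of ``slightly modifying the proof in \cite[Theorem 6]{MH}''.
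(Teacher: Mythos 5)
The paper offers no argument of its own for this theorem --- it simply appeals to a slight modification of \cite[Theorem 6]{MH} --- so there is nothing to match your write-up against line by line; judged on its own terms, your plan (transfer a point-finite development at non-isolated points through the sheet structure of $f$, then apply Lemma~\ref{l4} in the direction (3)$\Rightarrow$(2)) is a reasonable way to realize that modification, and your preliminary steps are correct: a local homeomorphism sends non-isolated points to non-isolated points, and closedness, finite fibres and Hausdorffness give for each $y\in Y$ a neighborhood $W_y$ with $f^{-1}(W_y)$ a finite disjoint union of open sheets mapped homeomorphically onto $W_y$. The problem is that the two verifications you postpone as ``needing care'' are exactly where the proof lives, and as sketched they do not yet work. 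First, if $\mathcal{U}_n$ is built from sheets over $W_y$ for \emph{every} $y\in Y$, point-finiteness at non-isolated points fails: a fixed non-isolated $x$ can lie in sheets over $W_y$ for infinitely many distinct $y$, producing infinitely many distinct members $U\cap f^{-1}(V)$ of $\mathcal{U}_n$ through $x$ even for a single $V$; your phrase ``$x$ lies in at most one sheet per $V$'' has no meaning until the sheets are tied to a fixed cover of $Y$. The repair is to refine $\{W_y: y\in Y\}$ first to an open cover $\mathcal{W}'$ of $Y$ that is point-finite at non-isolated points, each $W'\in\mathcal{W}'$ contained in some $W_{y(W')}$ (this refinement exists because $Y^{d}$ is metacompact by Lemma~\ref{l4}; it is the same refinement step used in the proofs of Lemmas~\ref{l0} and~\ref{l8}). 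Then a non-isolated $x\in X$ lies in at most one sheet over each of the finitely many $W'\ni f(x)$, so $\mathcal{U}_n=\{S\cap f^{-1}(V): S \mbox{ a sheet over some } W'\in\mathcal{W}',\ V\in\mathcal{V}_n\}$ is point-finite at non-isolated points.

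Second, your star-shrinking argument controls only the one piece $U_0$ you selected: from $V\subset f(G\cap U_0)$ you cannot conclude $U\cap f^{-1}(V)\subset G$ for the \emph{other} sheets $U$ containing $x$, since $f^{-1}(f(G\cap U_0))$ meets those sheets outside $G$, and ``passing to $W_{f(x)}$'' does not eliminate them. With the point-finite refinement in place the fix is short: let $S_1,\dots,S_m$ be the finitely many sheets containing $x$, lying over $W'_1,\dots,W'_m$; each $f(G\cap S_j)$ is open and contains $f(x)$, so choose $n$ with $\mbox{st}(f(x),\mathcal{V}_n)\subset\bigcap_{j\leq m}f(G\cap S_j)$, which is possible because $f(x)$ is non-isolated and $\{\mathcal{V}_n\}_n$ is a development at non-isolated points of $Y$. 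Every member of $\mathcal{U}_n$ containing $x$ then has the form $S_j\cap f^{-1}(V)=(f|S_j)^{-1}(V\cap W'_j)\subset G\cap S_j$, whence $\mbox{st}(x,\mathcal{U}_n)\subset G$. With these two repairs your construction does yield a point-finite development at non-isolated points for $X$, and Lemma~\ref{l4} finishes the proof; without them, the point-finiteness claim and the star estimate --- which you yourself flag as ``the heart'' of the argument --- are not established.
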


It is well known that every open and $k$-to-one map is a closed and
locally homeomorphism map. Hence, we have the following corollary.

\begin{corollary}
Open and $k$-to-one maps inversely preserve spaces with an uniform
base at non-isolated points.
\end{corollary}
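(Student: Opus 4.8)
The plan is to deduce the Corollary from the preceding Theorem by verifying that an open $k$-to-one map is automatically closed and a local homeomorphism; then the Theorem (with the roles of $X$ and $Y$ as stated, $Y$ having the uniform base at non-isolated points and $X$ being the domain) applies verbatim. So the real content is the classical fact cited in the remark: every open $k$-to-one map $f\colon X\to Y$ between Hausdorff spaces is a closed map and a local homeomorphism. I would carry this out in two steps.

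First, I would show $f$ is a local homeomorphism. Fix $x\in X$ and write $f^{-1}(f(x))=\{x=x_1,x_2,\dots,x_k\}$, which are $k$ distinct points since $f$ is exactly $k$-to-one; using Hausdorffness of $X$, choose pairwise disjoint open sets $U_1,\dots,U_k$ with $x_i\in U_i$. Because $f$ is open, $f(U_1)\cap\cdots\cap f(U_k)$ is an open neighborhood of $f(x)$, and shrinking each $U_i$ to $U_i\cap f^{-1}(f(U_1)\cap\cdots\cap f(U_k))$ we may assume $f(U_1)=\cdots=f(U_k)=:V$, an open set. Then for every $y\in V$, the fiber $f^{-1}(y)$ meets each $U_i$ (it has a point in each, since $f(U_i)=V$) and has exactly $k$ points, so it meets each $U_i$ in exactly one point and lies entirely in $U_1\cup\cdots\cup U_k$; hence $f|_{U_1}\colon U_1\to V$ is a continuous open bijection, i.e. a homeomorphism onto the open set $V=f(U_1)$. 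This gives the local homeomorphism property at $x$, and in particular $f$ is finite-to-one.

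Second, I would show $f$ is closed. Let $A\subset X$ be closed and suppose $y\notin f(A)$; write $f^{-1}(y)=\{x_1,\dots,x_k\}$. By the previous step pick for each $i$ an open $U_i\ni x_i$ with $f|_{U_i}$ a homeomorphism onto an open set, and after intersecting with the open set $X\setminus A$ (which contains every $x_i$) we may assume each $U_i\cap A=\varnothing$; shrinking as above we may also assume $f(U_1)=\cdots=f(U_k)=:V$ is open and $f^{-1}(V)\subset U_1\cup\cdots\cup U_k$. Then $f^{-1}(V)\cap A=\varnothing$, so $V\cap f(A)=\varnothing$, i.e. $V$ is an open neighborhood of $y$ missing $f(A)$. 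Hence $f(A)$ is closed, so $f$ is a closed map. Combining the two steps, $f$ is a closed, finite-to-one ($k$-to-one) local homeomorphism, and the Theorem yields that $X$ has a uniform base at non-isolated points whenever $Y$ does.

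The only mildly delicate point — and the one I would be most careful about — is the uniform shrinking of the neighborhoods $U_i$ so that all their images coincide and $f^{-1}$ of that common image stays inside $\bigcup U_i$; this uses openness of $f$ to make the common image $V$ open, and the exact count $|f^{-1}(y)|=k$ (not just finiteness) to guarantee no fiber over $V$ can escape $\bigcup U_i$. Everything else is bookkeeping, and since the Theorem is already granted, no further work on uniform bases at non-isolated points is needed.
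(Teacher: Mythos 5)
Your proposal is correct and follows essentially the same route as the paper: the paper likewise deduces the corollary from the preceding theorem by invoking the well-known fact that an open $k$-to-one map (between Hausdorff spaces) is closed and a local homeomorphism, which it cites without proof. Your only addition is a correct, self-contained verification of that classical fact, including the key shrinking step that uses exactness of the fiber count to trap $f^{-1}(V)$ inside the disjoint neighborhoods.
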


Finally, we consider the inverse image of spaces with an uniform
base at non-isolated points under the irreducible perfect maps.

\begin{lemma}\label{l5}
Let $X$ be regular and metacompact at non-isolated points. If
$\{\mathcal{U}_{n}\}_{n}$ is a sequence of open coverings of $X$,
then there exists a sequence $\{\mathcal{V}_{n}\}_{n}$ of open
coverings of $X$ such that, for any $y\in X^{d}$,
$\bigcap_{n\in\mathbb{N}}\overline{\mbox{st}(y,
\mathcal{V}_{n})}=\bigcap_{n\in\mathbb{N}}\mbox{st}(y,
\mathcal{V}_{n})\subset\bigcap_{n\in\mathbb{N}}\mbox{st}(y,
\mathcal{U}_{n})$.
\end{lemma}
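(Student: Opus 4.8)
The plan is to mimic the classical construction that turns a $G_\delta$-diagonal into a regular (closed-star) $G_\delta$-diagonal, but carried out only at the non-isolated points and using metacompactness there to control the stars. First I would fix the given sequence $\{\mathcal{U}_n\}_n$. For each $n$, since $X$ is metacompact at non-isolated points, I would pass to an open refinement $\mathcal{W}_n$ of $\mathcal{U}_n$ which is point-finite at every non-isolated point and still covers $X$. Then I would build $\{\mathcal{V}_n\}_n$ by an interleaving/shrinking argument: recursively choose open covers $\mathcal{V}_n$ so that $\mathcal{V}_{n+1}$ star-refines (at non-isolated points) the common refinement of $\mathcal{W}_{n+1}$ and $\mathcal{V}_n$, i.e. for each non-isolated $y$, $\mathrm{st}(y,\mathcal{V}_{n+1})$ is contained in some member of $\mathcal{V}_n$ and in some member of $\mathcal{W}_{n+1}$. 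Here I would use that a metacompact (at non-isolated points) regular space admits, for each open cover, a point-finite open refinement, and then use regularity to shrink so that closures behave.

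The second step is to verify the three required containments. The inclusion $\bigcap_n \mathrm{st}(y,\mathcal{V}_n)\subset\bigcap_n \mathrm{st}(y,\mathcal{U}_n)$ is immediate from the star-refinement condition, since each $\mathrm{st}(y,\mathcal{V}_{n+1})$ lands inside a single element of $\mathcal{W}_n\prec\mathcal{U}_n$, hence inside $\mathrm{st}(y,\mathcal{U}_n)$. The inclusion $\bigcap_n \mathrm{st}(y,\mathcal{V}_n)\subset\bigcap_n \overline{\mathrm{st}(y,\mathcal{V}_n)}$ is trivial. The heart of the matter is the reverse inclusion $\bigcap_n \overline{\mathrm{st}(y,\mathcal{V}_n)}\subset\bigcap_n \mathrm{st}(y,\mathcal{V}_n)$: given $z\in\overline{\mathrm{st}(y,\mathcal{V}_{n+1})}$, I want $z\in\mathrm{st}(y,\mathcal{V}_n)$. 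This is where the star-refinement (two steps of it) plus regularity pays off: if $V\in\mathcal{V}_{n+1}$ contains $y$, then every point of $\overline{V}$ that is ``seen'' by a $\mathcal{V}_{n+1}$-neighbourhood meeting $V$ lies in $\mathrm{st}(y,\mathcal{V}_n)$ because $\mathrm{st}(\mathrm{st}(y,\mathcal{V}_{n+1}),\mathcal{V}_{n+1})$ — the double star — is arranged to sit inside a single member of $\mathcal{V}_n$. Concretely one should set things up so that $\overline{\mathrm{st}(y,\mathcal{V}_{n+1})}\subset\mathrm{st}(y,\mathcal{V}_n)$ directly, by choosing $\mathcal{V}_{n+1}$ so that the closure of each of its members is contained in a member of $\mathcal{V}_n$ (using regularity and point-finiteness at $y$, only finitely many members of $\mathcal{V}_{n+1}$ contain $y$, so a finite intersection/shrinking argument applies).

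The main obstacle I anticipate is exactly making the recursive shrinking work \emph{simultaneously} at all non-isolated points while only having metacompactness (not full normality or paracompactness) at those points: one cannot in general star-refine an arbitrary open cover in a merely metacompact space. The way around this is to exploit that we only need the star condition at non-isolated points and that each $\mathcal{W}_n$ is point-finite there — for a point-finite cover, $\mathrm{st}(y,\mathcal{W}_n)$ is a finite union, so one can use regularity to separate $y$ from the (closed) complement of that finite union and thereby shrink each member of $\mathcal{W}_{n+1}$ around $y$ so that its closure stays inside $\mathrm{st}(y,\mathcal{V}_n)$. Doing this coherently over all $y\in X^d$ at once, so that the shrunken sets still form an open \emph{cover} of $X$ (including the isolated points, which we just keep as singletons or as their old neighbourhoods), is the delicate bookkeeping; I would handle it by taking, for each $n$, the common refinement of $\mathcal{W}_{n+1}$, the point-finite refinement, and a closure-shrinking of $\mathcal{V}_n$, and invoking metacompactness once more to thin the result back to a point-finite (at non-isolated points) open cover $\mathcal{V}_{n+1}$. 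Once the covers are in hand, the three displayed inclusions follow as above, with the equality $\bigcap_n\overline{\mathrm{st}(y,\mathcal{V}_n)}=\bigcap_n\mathrm{st}(y,\mathcal{V}_n)$ coming from the chain $\overline{\mathrm{st}(y,\mathcal{V}_{n+1})}\subset\mathrm{st}(y,\mathcal{V}_n)$ for every $n$.
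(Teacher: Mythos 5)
Your concrete construction --- recursively choosing open covers $\mathcal{V}_{n+1}$ that are point-finite at non-isolated points, refine $\mathcal{U}_{n+1}$ and $\mathcal{V}_{n}$, and (by regularity) have each member's closure contained in a member of $\mathcal{V}_{n}$, so that point-finiteness at $y\in X^{d}$ yields $\overline{\mbox{st}(y,\mathcal{V}_{n+1})}\subset\mbox{st}(y,\mathcal{V}_{n})$ and hence the three displayed inclusions --- is exactly the paper's proof. The initial star-refinement framing is superfluous (and, as you yourself note, unavailable under mere metacompactness), but since you replace it by the closure-shrinking argument, the proposal is correct and essentially the same as the paper's.
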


\begin{proof}
Since $X$ is regular and metacompact at non-isolated points, there
exists a sequence $\{\mathcal{V}_{n}\}_{n}$ of open coverings of $X$
satisfying the following conditions:

(i)\, For each $n\in \mathbb{N}$, $\mathcal{V}_{n}$ is point-finite
at non-isolated points and refines
$$(\wedge_{i<n}\mathcal{V}_{i})\bigwedge (\wedge_{i\leq
n}(\mathcal{U}_{i});$$

(ii)\, For any $V\in \mathcal{V}_{n}$ and $i<n$, there exists a
$W\in \mathcal{V}_{i}$ such that $\overline{V}\subset W$.

Let $y\in X^{d}$. For each $n\in \mathbb{N}$, there are only
finitely many members of $\mathcal{V}_{n}$ which contains $y$. Hence
$\overline{\mbox{st}(y, \mathcal{V}_{n+1})}=\cup\{\overline{V}:y\in
V\in \mathcal{V}_{n+1}\}\subset \mbox{st}(y, \mathcal{V}_{n})$. Thus
$\bigcap_{n\in\mathbb{N}}\overline{\mbox{st}(y,
\mathcal{V}_{n})}=\bigcap_{n\in\mathbb{N}}\mbox{st}(y,
\mathcal{V}_{n})\subset\bigcap_{n\in\mathbb{N}}\mbox{st}(y,
\mathcal{U}_{n})$.
\end{proof}

\begin{lemma}\label{l6}
Let $X$ be a regular space, where $X$ has a $G_{\delta}$-diagonal at
non-isolated points. If $X$ is metacompact at non-isolated points,
then $X$ has a $G_{\delta}^{\ast}$-diagonal at non-isolated points.
\end{lemma}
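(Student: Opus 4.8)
The plan is to deduce Lemma~\ref{l6} directly from Lemma~\ref{l5}. Indeed, the hypothesis gives a sequence $\{\mathcal{U}_n\}_n$ of open covers of $X$ witnessing the $G_\delta$-diagonal at non-isolated points, i.e. $\bigcap_{n\in\mathbb{N}}\mathrm{st}(x,\mathcal{U}_n)=\{x\}$ for every $x\in X^d$. Since $X$ is regular and metacompact at non-isolated points, Lemma~\ref{l5} applies to this very sequence and produces a sequence $\{\mathcal{V}_n\}_n$ of open covers of $X$ with
$$\bigcap_{n\in\mathbb{N}}\overline{\mathrm{st}(y,\mathcal{V}_n)}=\bigcap_{n\in\mathbb{N}}\mathrm{st}(y,\mathcal{V}_n)\subset\bigcap_{n\in\mathbb{N}}\mathrm{st}(y,\mathcal{U}_n)$$
for every $y\in X^d$.

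First I would fix an arbitrary $y\in X^d$ and chase the inclusions. The right-hand side equals $\{y\}$ by the choice of $\{\mathcal{U}_n\}_n$, so $\bigcap_n\overline{\mathrm{st}(y,\mathcal{V}_n)}\subset\{y\}$. On the other hand $y\in\mathrm{st}(y,\mathcal{V}_n)\subset\overline{\mathrm{st}(y,\mathcal{V}_n)}$ for every $n$, because $\mathcal{V}_n$ covers $X$ and hence some member contains $y$; therefore $y\in\bigcap_n\overline{\mathrm{st}(y,\mathcal{V}_n)}$. Combining the two, $\bigcap_{n\in\mathbb{N}}\overline{\mathrm{st}(y,\mathcal{V}_n)}=\{y\}$. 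Since $y\in X^d$ was arbitrary, the sequence $\{\mathcal{V}_n\}_n$ witnesses that $X$ has a $G_\delta^\ast$-diagonal at non-isolated points, which is exactly the conclusion.

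There is essentially no obstacle here: the content is entirely packaged in Lemma~\ref{l5}, and the remaining argument is the routine observation that a point always lies in its own star with respect to a cover, together with the trivial set-theoretic squeeze $\{y\}\subset\bigcap_n\overline{\mathrm{st}(y,\mathcal{V}_n)}\subset\bigcap_n\mathrm{st}(y,\mathcal{U}_n)=\{y\}$. The only point worth a sentence of care is confirming that "metacompact at non-isolated points" in the hypothesis of Lemma~\ref{l6} is precisely the hypothesis needed to invoke Lemma~\ref{l5}, so that no extra regularity or covering assumption is smuggled in; this is immediate from the statements as given. If one wanted to be slightly more careful about the degenerate case, one notes that for isolated points no condition is required, so restricting attention to $X^d$ loses nothing.
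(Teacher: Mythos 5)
Your proposal is correct and matches the paper's approach exactly: the paper's proof of this lemma is simply ``It is easy to see by Lemma~\ref{l5}'', and your argument is the routine fleshing-out of that reference, applying Lemma~\ref{l5} to the $G_{\delta}$-diagonal sequence and using the squeeze $\{y\}\subset\bigcap_{n}\overline{\mbox{st}(y,\mathcal{V}_{n})}\subset\bigcap_{n}\mbox{st}(y,\mathcal{U}_{n})=\{y\}$.
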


\begin{proof}
It is easy to see by Lemma 3.8.
\end{proof}

\begin{lemma}\label{l7}
Let $X$ be a regular space, where $X$ has a
$G_{\delta}^{\ast}$-diagonal at non-isolated points. If $X$ is a
$\mbox{w}\triangle$-space at non-isolated points, then $X$ is a
developable space at non-isolated points.
\end{lemma}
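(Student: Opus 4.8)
The plan is to mimic the classical Borges–type argument that a regular $\mbox{w}\triangle$-space with a $G_\delta^\ast$-diagonal is developable, but carried out only at the non-isolated points and with the isolated points handled trivially. Let $\{\mathcal{U}_n\}_n$ witness the $\mbox{w}\triangle$-property at non-isolated points and let $\{\mathcal{G}_n\}_n$ witness the $G_\delta^\ast$-diagonal at non-isolated points. Replacing each $\mathcal{U}_n$ by $\wedge_{i\le n}\mathcal{U}_i$ and likewise for $\mathcal{G}_n$ (and refining so that each member of the new $n$-th cover contains some member of the old ones, which only shrinks stars), I may assume both sequences are decreasing in the star-refinement sense. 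Now put $\mathcal{W}_n=\mathcal{U}_n\wedge\mathcal{G}_n$ (together with all singletons of isolated points thrown in, so each $\mathcal{W}_n$ is an open cover of all of $X$). The claim will be that $\{\mathcal{W}_n\}_n$ is a development at non-isolated points.

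First I would fix a non-isolated point $x$ and an open neighbourhood $U$ of $x$, and argue by contradiction: suppose $\mathrm{st}(x,\mathcal{W}_n)\not\subset U$ for every $n$, and pick $x_n\in\mathrm{st}(x,\mathcal{W}_n)\setminus U$. Since $\mathrm{st}(x,\mathcal{W}_n)\subset\mathrm{st}(x,\mathcal{U}_n)$, the $\mbox{w}\triangle$-property gives that $\{x_n\}_n$ has a cluster point $p$. The next step is to show $p=x$. Here I would use regularity plus the $G_\delta^\ast$-diagonal: for each $n$, $x_n\in\mathrm{st}(x,\mathcal{G}_n)$, so any neighbourhood of a cluster point $p$ of $\{x_n\}$ meets $\mathrm{st}(x,\mathcal{G}_n)$ cofinally, which forces $p\in\overline{\mathrm{st}(x,\mathcal{G}_n)}$ for every $n$ (using that the $\mathcal{G}_n$ are decreasing so the tail of $\{x_n\}$ from index $n$ on still lies in $\mathrm{st}(x,\mathcal{G}_n)$); hence $p\in\bigcap_n\overline{\mathrm{st}(x,\mathcal{G}_n)}=\{x\}$, so $p=x$. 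But then $x$ is a cluster point of $\{x_n\}_n$ while all $x_n$ lie outside the open neighbourhood $U$ of $x$ — contradiction. This shows $\{\mathrm{st}(x,\mathcal{W}_n)\}_n$ is a neighbourhood base at each non-isolated $x$, i.e.\ $X$ is developable at non-isolated points.

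I do not actually need metacompactness at non-isolated points in this lemma itself — that hypothesis was already spent in Lemma~\ref{l6} to upgrade a $G_\delta$-diagonal to a $G_\delta^\ast$-diagonal; here I only use that each $\mathcal{W}_n$ is an open cover and that the diagonal sequence can be taken decreasing, which costs nothing. One point to be careful about: the $\mbox{w}\triangle$-definition only guarantees a cluster point of $\{x_n\}$, not a convergent subsequence, and the space is not assumed first countable, so I must phrase the "$p=x$" step entirely in terms of cluster points and closures of stars rather than in terms of limits of subsequences; this is exactly why the $G_\delta^\ast$ (closure) version of the diagonal, rather than the plain $G_\delta$ version, is the right hypothesis.

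The main obstacle I anticipate is precisely this closure bookkeeping: making sure that from "$p$ is a cluster point of $\{x_n\}_{n\ge 1}$" one legitimately deduces "$p\in\overline{\mathrm{st}(x,\mathcal{G}_m)}$ for each fixed $m$", which requires that $x_n\in\mathrm{st}(x,\mathcal{G}_m)$ for all $n\ge m$ — i.e.\ that the sequence $\{\mathcal{G}_n\}_n$ really has been arranged so that larger-index stars sit inside smaller-index ones. If one only has $x_n\in\mathrm{st}(x,\mathcal{G}_n)$ for the matching index $n$, the argument breaks, so the preliminary normalization of the two cover sequences is the load-bearing step and should be stated explicitly before the contradiction argument begins.
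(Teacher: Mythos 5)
Your proposal is correct and follows essentially the same route as the paper: form the meets $\mathcal{U}_n\wedge\mathcal{V}_n$, pick points $x_n\in\mbox{st}(x,\mathcal{U}_n\wedge\mathcal{V}_n)\setminus U$, use the $\mbox{w}\triangle$-property to get a cluster point, identify it with $x$ via $\bigcap_n\overline{\mbox{st}(x,\mathcal{V}_n)}=\{x\}$, and contradict $x_n\notin U$. Your explicit preliminary normalization making the diagonal sequence decreasing is exactly the bookkeeping the paper's proof silently relies on when it asserts $y\in\overline{\mbox{st}(x,\mathcal{V}_n)}$ for every fixed $n$, so your write-up is, if anything, slightly more careful (modulo the reversed wording ``contains some member of the old ones,'' which should read ``is contained in'').
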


\begin{proof}
let $\{\mathcal{U}_{n}\}_{n}$ and $\{\mathcal{V}_{n}\}_{n}$ be a
$G_{\delta}^{\ast}$-diagonal at non-isolated points and a
$\mbox{w}\triangle$-sequence at non-isolated points, respectively.
Then $\{\mathcal{U}_{n}\wedge\mathcal{V}_{n}\}_{n}$ is a development
at non-isolated points for $X$. Indeed, for any $x\in X-I$ and $x\in
U$ with $U\in\tau (X)$, there exists an $m\in\mathbb{N}$ such that
$x\in\mbox{st}(x, \mathcal{U}_{n}\wedge\mathcal{V}_{n})\subset U$.
Suppose not, then $\mbox{st}(x,
\mathcal{U}_{n}\wedge\mathcal{V}_{n})\not\subset U$ for any
$n\in\mathbb{N}$. We can choose a point $x_{n}\in\mbox{st}(x,
\mathcal{U}_{n}\wedge\mathcal{V}_{n})\setminus U $ for any
$n\in\mathbb{N}$. Since $\mbox{st}(x,
\mathcal{U}_{n}\wedge\mathcal{V}_{n})\subset\mbox{st}(x,
\mathcal{V}_{n})$, $x_{n}\in\mbox{st}(x, \mathcal{V}_{n})$. Hence
$\{x_{n}\}$ has a cluster point. Let $y$ be a cluster point of
$\{x_{n}\}$. Since $\mbox{st}(x,
\mathcal{V}_{n})\subset\overline{\mbox{st}(x, \mathcal{V}_{n})}$,
$y\in\overline{\mbox{st}(x, \mathcal{V}_{n})}$. Hence $y=x$ because
$\bigcap_{n\in\mathbb{N}}\overline{\mbox{st}(x,
\mathcal{V}_{n})}=\{x\}$. Thus $\{x_{n}\}$ has only one cluster
point $x$. But $x_{n}\notin U$ for any $n\in \mathbb{N}$, a
contradiction.
\end{proof}

\begin{lemma}\label{l7}
Let $f:X\rightarrow Y$ be an irreducible perfect map, where $Y$ is a
$\mbox{w}\triangle$-space at non-isolated points. Then $X$ is a
$\mbox{w}\triangle$-space at non-isolated points.
\end{lemma}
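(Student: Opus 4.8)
The plan is to pull back a $\mathrm{w}\triangle$-sequence at non-isolated points from $Y$ along $f$ and exploit the fact that a perfect map is both closed and has compact fibres. Let $\{\mathcal{U}_n\}_n$ be a $\mathrm{w}\triangle$-sequence at non-isolated points for $Y$, so that whenever $y\in Y-I(Y)$ and $y_n\in\mathrm{st}(y,\mathcal{U}_n)$ the sequence $\{y_n\}_n$ has a cluster point in $Y$. For each $n$ define $\mathcal{V}_n=\{f^{-1}(U):U\in\mathcal{U}_n\}$; since $f$ is continuous and onto, each $\mathcal{V}_n$ is an open cover of $X$, and one checks directly that $f(\mathrm{st}(x,\mathcal{V}_n))\subset\mathrm{st}(f(x),\mathcal{U}_n)$ and in fact $\mathrm{st}(x,\mathcal{V}_n)=f^{-1}(\mathrm{st}(f(x),\mathcal{U}_n))$ because the members of $\mathcal{V}_n$ are full preimages. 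I would then claim that $\{\mathcal{V}_n\}_n$ is a $\mathrm{w}\triangle$-sequence at non-isolated points for $X$.

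First I would record the relationship between non-isolated points of $X$ and of $Y$ under an irreducible perfect map: if $x\in X^d$, I want to argue that $f(x)\in Y^d$, or at least that the only bad case (where $f(x)$ is isolated in $Y$) forces $x$ itself to be isolated in $X$, contradicting $x\in X^d$. Here irreducibility is the crucial hypothesis: if $f(x)=y$ were isolated in $Y$, then $f^{-1}(y)$ is a clopen compact subset of $X$, and irreducibility of the restriction $f|_{f^{-1}(y)}$ onto the one-point space forces $f^{-1}(y)$ to be a single point (otherwise a proper closed subset of $X$ still maps onto $Y$), so $x$ would be isolated in $X$. Hence for $x\in X^d$ we may assume $y=f(x)\in Y^d$, which is exactly what is needed to invoke the $\mathrm{w}\triangle$ property of $\{\mathcal{U}_n\}_n$ at $y$.

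Now take $x\in X^d$ and a sequence $\{x_n\}_n$ with $x_n\in\mathrm{st}(x,\mathcal{V}_n)$. Then $f(x_n)\in\mathrm{st}(f(x),\mathcal{U}_n)$ for each $n$, and since $f(x)\in Y^d$, the sequence $\{f(x_n)\}_n$ has a cluster point $y_0\in Y$. The compact fibre $f^{-1}(y_0)$ together with closedness of $f$ is what converts this cluster point downstairs into a cluster point upstairs: I would show that $\{x_n\}_n$ clusters at some point of $f^{-1}(y_0)$. The standard argument is that if $\{x_n\}_n$ had no cluster point in the compact set $K=f^{-1}(y_0)$, then each point of $K$ has an open neighbourhood meeting $\{x_n\}_n$ in a finite set; by compactness finitely many such neighbourhoods cover $K$, giving an open $V\supset K$ with $x_n\in V$ for only finitely many $n$; then $X-V$ is closed, $f(X-V)$ is closed by closedness of $f$, and $y_0\notin f(X-V)$, so $Y-f(X-V)$ is an open neighbourhood of $y_0$ containing $f(x_n)$ for only finitely many $n$ — contradicting that $y_0$ is a cluster point of $\{f(x_n)\}_n$. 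Hence $\{x_n\}_n$ has a cluster point in $X$, which establishes the $\mathrm{w}\triangle$ property at the non-isolated point $x$.

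The main obstacle I expect is not the final compactness-plus-closedness argument (which is routine) but the bookkeeping in the first step: making sure that $f$ carries non-isolated points of $X$ to non-isolated points of $Y$, so that the $\mathrm{w}\triangle$ hypothesis on $Y$ is actually applicable at the relevant point. This is precisely where the irreducibility hypothesis is consumed, and I would state it as a small preliminary observation — if $x\in X^d$ then $f(x)\in Y^d$ — proved by the clopen-fibre argument above, before entering the main sequence-chasing argument. Everything else is a direct transcription of the $\mathrm{w}\triangle$ condition through full preimages.
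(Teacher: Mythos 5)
Your proposal is correct and follows essentially the same route as the paper: pull back the $\mbox{w}\triangle$-sequence via full preimages $f^{-1}(\mathcal{U}_n)$, use irreducibility to see that $f$ maps non-isolated points of $X$ to non-isolated points of $Y$, and use perfectness to lift a cluster point of $\{f(x_n)\}$ to a cluster point of $\{x_n\}$. The paper merely asserts the last two facts, whereas you supply the (correct) clopen-fibre and compactness-plus-closedness details.
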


\begin{proof}
Let $\{\mathcal{U}_{n}\}_{n}$ be a $\mbox{w}\triangle$-sequence at
non-isolated points for $Y$. We only prove that
$\{f^{-1}(\mathcal{U}_{n})\}_{n}$ is a $\mbox{w}\triangle$-sequence
at non-isolated points for $X$. Let $x\in X-I(X)$ and $x_{n}\in
\mbox{st}(x, f^{-1}(\mathcal{U}_{n}))$ for each $n\in\mathbb{N}$.
Then $f(x_{n})\in \mbox{st}(f(x), \mathcal{U}_{n})$. Since $f$ is an
irreducible map, $f(x)\in Y-I(Y)$. Hence $\{f(x_{n})\}$ has a
cluster point in $Y$. Since $f$ is a perfect map, $\{x_{n}\}$ has a
cluster point in $X$. Hence $\{f^{-1}(\mathcal{U}_{n})\}_{n}$ is a
$\mbox{w}\triangle$-sequence at non-isolated points for $X$.
\end{proof}

\begin{lemma}\label{l8}
Let $f:X\rightarrow Y$ be an irreducible perfect map, where $X$ is
regular and has a $G_{\delta}$-diagonal. If $Y$ is metacompact at
non-isolated points, so is $X$.
\end{lemma}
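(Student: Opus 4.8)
The plan is to reduce the statement to the classical fact that a perfect preimage of a metacompact space is metacompact, but applied to the restriction of $f$ to the non-isolated parts rather than to $f$ itself. The only preliminary point that really uses irreducibility is that $f$ carries $X^{d}$ onto $Y^{d}$ and carries nothing else onto $Y^{d}$; that is, $f^{-1}(Y^{d})=X^{d}$.

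\emph{Step 1: $f^{-1}(Y^{d})=X^{d}$.} First I would check $f(I(X))\subseteq I(Y)$: if $x\in I(X)$ then $X\setminus\{x\}$ is closed, so by irreducibility $f(X\setminus\{x\})\neq Y$; since $f$ is closed, $Y\setminus f(X\setminus\{x\})$ is a nonempty open set, and every $y$ in it satisfies $f^{-1}(y)\subseteq\{x\}$, hence $f^{-1}(y)=\{x\}$ and $y=f(x)$, so $\{f(x)\}$ is open, i.e.\ $f(x)\in I(Y)$. Next I would check $f^{-1}(I(Y))\subseteq I(X)$: let $y\in I(Y)$ and $K=f^{-1}(y)$, which is compact and open in $X$, with $f(X\setminus K)=Y\setminus\{y\}$. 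If $|K|\geq 2$, pick distinct $x_{0},x_{1}\in K$ and, using that $X$ is Hausdorff and $K$ open, an open set $U$ with $x_{0}\in U\subseteq K$ and $x_{1}\notin U$; then $X\setminus U$ is a \emph{proper} closed subset of $X$ with $f(X\setminus U)\supseteq f(X\setminus K)=Y\setminus\{y\}$ and $f(x_{1})=y\in f(X\setminus U)$, so $f(X\setminus U)=Y$, contradicting irreducibility. Hence $K$ is a single point, which, being clopen, is isolated. Combining the two inclusions with surjectivity of $f$ gives $f^{-1}(I(Y))=I(X)$, and therefore $f^{-1}(Y^{d})=X\setminus f^{-1}(I(Y))=X\setminus I(X)=X^{d}$.

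\emph{Step 2: finishing.} Since $I(X)$ and $I(Y)$ are open, $X^{d}$ and $Y^{d}$ are closed, so $g:=f|_{X^{d}}\colon X^{d}\to Y^{d}$ is closed, has compact fibres, and by Step 1 is onto; thus $g$ is a perfect surjection. Since $Y$ is metacompact at non-isolated points, $Y^{d}$ is metacompact. Now I would run the standard argument that metacompactness is inversely preserved by perfect maps: given an open cover $\mathcal{U}$ of $X^{d}$, cover each compact fibre $g^{-1}(z)$ by finitely many members of $\mathcal{U}$ with union $U^{z}$, note that $\{\,Y^{d}\setminus g(X^{d}\setminus U^{z}) : z\in Y^{d}\,\}$ is an open cover of $Y^{d}$, take a point-finite open refinement $\{V_{\alpha}\}_{\alpha}$ of it with $V_{\alpha}\subseteq Y^{d}\setminus g(X^{d}\setminus U^{z(\alpha)})$, and observe that the family of all sets $g^{-1}(V_{\alpha})\cap W$, where $W$ runs through the finitely many chosen members over $z(\alpha)$, is a point-finite open refinement of $\mathcal{U}$ covering $X^{d}$. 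Hence $X^{d}$ is metacompact, i.e.\ $X$ is metacompact at non-isolated points.

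\emph{The main obstacle.} The delicate step is the first one: one cannot literally delete an isolated point of $X$, because it is not yet known which points of $X$ are isolated, so from a two-point fibre over an isolated point of $Y$ one must manufacture a proper closed subset of $X$ that still surjects onto $Y$, and this is exactly where the compactness of the fibres (together with Hausdorff separation) is used. Beyond this the argument does not really need the full strength of regularity or of the $G_{\delta}$-diagonal of $X$; those hypotheses are presumably carried over from the setting in which the lemma will be applied.
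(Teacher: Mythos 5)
Your proof is correct, and at its heart it is the same classical ``perfect preimage of a metacompact space is metacompact'' argument that the paper uses; the difference is in packaging. The paper runs the argument globally on $f:X\rightarrow Y$: cover $X$ by $\mathcal{U}$, choose a finite $\mathcal{U}(y)\subset\mathcal{U}$ over each compact fibre, use closedness of $f$ to get an open $V_{y}$ with $f^{-1}(V_{y})\subset\cup\mathcal{U}(y)$, refine $\{V_{y}\}$ by an open cover $\{W_{y}\}$ point-finite at non-isolated points of $Y$, and take $\{f^{-1}(W_{y})\cap U: U\in\mathcal{U}(y)\}$; it never restricts to $X^{d}$ and never mentions irreducibility. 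However, the paper's final point-finiteness claim at a non-isolated $x\in X$ needs $\{W_{y}\}$ to be point-finite at $f(x)$, i.e.\ it silently needs $f(X^{d})\subset Y^{d}$, and this is precisely what irreducibility supplies (the paper invokes this fact explicitly only in its lemma on inverse preservation of $\mbox{w}\triangle$-spaces under irreducible perfect maps). Your version makes that step explicit: Step 1 proves the stronger statement $f^{-1}(Y^{d})=X^{d}$, after which you reduce to the perfect surjection $X^{d}\rightarrow Y^{d}$ and rerun the standard refinement argument there. What your route buys is a transparent accounting of where irreducibility (and Hausdorffness) enter, and it confirms your correct observation that regularity and the $G_{\delta}$-diagonal of $X$ are not needed here (the paper's proof does not use them either). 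Two small remarks: the silent passage between ``$Y$ metacompact at non-isolated points'' and ``$Y^{d}$ metacompact'' (and back again for $X$ at the end) deserves a sentence --- it holds because isolated points can be covered by their own singletons, and the paper itself uses the two formulations interchangeably; and in your closing comment, compactness of the fibre is not in fact what drives Step 1 (for $y$ isolated, openness of $f^{-1}(y)$ together with the Hausdorff property suffices to carve out the proper closed set) --- compactness of fibres is used only in Step 2.
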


\begin{proof}
Let $\mathcal{U}$ be an open covering for $X$. There exists
$\mathcal{U}(y)\in \mathcal{U}^{<\omega}$ such that
$f^{-1}(y)\subset \cup\mathcal{U}(y)$ for any $y\in Y$. Then there
exists an open neighborhood $V_{y}$ of $y$ such that
$f^{-1}(V_{y})\subset \cup\mathcal{U}(y)$. Since $\{V_{y}:y\in Y\}$
is an open covering for $Y$, there exists a point-finite open
refinement $\{W_{y}:y\in Y\}$ at non-isolated points such that
$W_{y}\subset V_{y}$ for any $y\in Y$. Hence $\{f^{-1}(W_{y})\cap
U:y\in Y, U\in \mathcal{U}(y)\}$ is a point-finite open refinement
at non-isolated points of $\mathcal{U}$.
\end{proof}

\begin{theorem}\label{t0}
Let $f:X\rightarrow Y$ be an irreducible perfect map, where $X$ is
regular and has a $G_{\delta}$-diagonal. If $Y$ has an uniform base
at non-isolated points, so does $X$.
\end{theorem}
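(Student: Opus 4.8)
The plan is to chain together the lemmas of this section, using Lemma~\ref{l4} both to unpack the hypothesis on $Y$ and to package the conclusion about $X$. First I would observe that, since $Y$ has a uniform base at non-isolated points, Lemma~\ref{l4} gives that $Y$ has a development at non-isolated points and that $Y^{d}$ is metacompact. By the implications recorded just after the definitions of this section, a space developable at non-isolated points is a $\mbox{w}\triangle$-space at non-isolated points; thus $Y$ is a $\mbox{w}\triangle$-space at non-isolated points and is metacompact at non-isolated points.

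Next I would transport these two properties to $X$ along $f$. Since $f$ is an irreducible perfect map and $Y$ is a $\mbox{w}\triangle$-space at non-isolated points, the lemma on irreducible perfect maps and $\mbox{w}\triangle$-spaces gives that $X$ is a $\mbox{w}\triangle$-space at non-isolated points. Since $f$ is irreducible and perfect, $X$ is regular with a $G_{\delta}$-diagonal, and $Y$ is metacompact at non-isolated points, Lemma~\ref{l8} gives that $X$ is metacompact at non-isolated points. Now a space with a $G_{\delta}$-diagonal certainly has a $G_{\delta}$-diagonal at non-isolated points, so $X$ is a regular space with a $G_{\delta}$-diagonal at non-isolated points that is also metacompact at non-isolated points; Lemma~\ref{l6} then upgrades this to a $G_{\delta}^{\ast}$-diagonal at non-isolated points. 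Feeding the $G_{\delta}^{\ast}$-diagonal at non-isolated points and the $\mbox{w}\triangle$-property at non-isolated points into the lemma on $\mbox{w}\triangle$-spaces and developability, we obtain that $X$ is developable at non-isolated points. Finally, $X$ is developable at non-isolated points and $X^{d}$ is metacompact, so by Lemma~\ref{l4} (the implication $(4)\Rightarrow(2)$) $X$ has a uniform base at non-isolated points.

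Since every step is a direct application of an already-proved lemma, the proof is essentially a bookkeeping argument, and the main point to get right is that the hypotheses align at each invocation — in particular that the \emph{full} $G_{\delta}$-diagonal of $X$ (rather than merely one at non-isolated points) is exactly what Lemma~\ref{l8} consumes, and that ``metacompact at non-isolated points'' as used in Lemmas~\ref{l6} and~\ref{l8} coincides with ``$X^{d}$ is a metacompact subspace of $X$'' as used in Lemma~\ref{l4}; should these two formulations differ, one short reconciling remark would close the remaining gap. Regularity of $X$ is needed in Lemmas~\ref{l6} and~\ref{l8} and is part of the hypothesis, so no extra work is required there.
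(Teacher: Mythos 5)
Your proposal is correct and follows exactly the route the paper intends: its proof of Theorem~\ref{t0} is simply the citation of Lemmas~\ref{l5}, \ref{l6}, the two $\mbox{w}\triangle$-lemmas, \ref{l8} and~\ref{l4}, and your write-up chains these in the same way (transfer the $\mbox{w}\triangle$-property and metacompactness at non-isolated points from $Y$ to $X$, upgrade the $G_{\delta}$-diagonal to a $G_{\delta}^{\ast}$-diagonal at non-isolated points, conclude developability, then apply Lemma~\ref{l4}(4)$\Rightarrow$(2)). You merely supply the bookkeeping the paper leaves implicit, so no further changes are needed.
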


\begin{proof}
It is easy to see by Lemmas~\ref{l5}, \ref{l6}, \ref{l7}, \ref{l8}
and~\ref{l4}.
\end{proof}

We don't know whether we can omit the condition ``irreducible map''
in Theorem~\ref{t0}. So we have the following question.

\begin{question}
Let $f:X\rightarrow Y$ be a perfect map, where $X$ is regular and
has a $G_{\delta}$-diagonal. If $Y$ has an uniform base at
non-isolated points, does $X$ have an uniform base at non-isolated
points?
\end{question}

{\bf Acknowledgements}. The authors would like to thank the referee
for his$\diagup$her valuable comments and suggestions.

\vskip0.9cm


\begin{thebibliography}{99}
\bibitem{Al} P. S. Aleksandrov, On the metrisation of topological spaces (in
Russian), {\it Bull. Acad. Pol. Sci., S\'{e}r. Sci. Math. Astron.
Phys.}, {\bf 8}(1960), 135--140.

\bibitem{En} R. Engelking, General Topology (revised and completed edition), Heldermann Verlag, Berlin, 1989.

\bibitem{LL} F. C. Lin, S. Lin, Uniform covers at non-isolated points, {\it Topology Proc.}, {\bf 32}(2008),
259--275.

\bibitem{LL1} F. C. Lin, S. Lin, Uniform bases at non-isolated points and maps, {\it Houston. J. Math}, to appear.

\bibitem{Ls} S. Lin, Generalized Metric Spaces and Mappings (in Chinese),
Chinese Science Press, Beijing, 2007.

\bibitem{MR} P. J. Moody, G. M. Reed, A. W. Roscoe, P. J, Collins, A lattince of conditons on topological spaces,
{\it Fund. Math.}, {\bf 138}(1991), 69--81.

\bibitem{MH} L. Mou, H. Ohta, Sharp bases and mappings, {\it
Houston. J. Math}, {\bf 31}(2005), 227--238.

\bibitem{MS} S. G. Mr\'{o}wka,  On completely regular spaces,
{\it Fund. Math.}, {\bf 72}(1965), 998--1001.

\bibitem{Ta} Y. Tanaka, On open finite-to-one maps, {\it Bull. Tokyo Gakugei Univ.
IV}, {\bf 25}(1973), 1--13.
\end{thebibliography}
\end{document}